\newtheorem{theorem}{Theorem}[section]
\newtheorem{lemma}[theorem]{Lemma}
\newtheorem{proposition}[theorem]{Proposition}
\newcommand{\suchthat}{\;\ifnum\currentgrouptype=16 \middle\fi|\;}
\newcommand{\Gal}[1]{\mbox{Gal}(#1)}
\newcommand{\Hol}[1]{\mbox{Hol}(#1)}
\newcommand{\Aut}[1]{\mbox{Aut}(#1)}
\newcommand{\Stab}[1]{\mbox{Stab}(#1)}
\newcommand{\Perm}[1]{\mbox{Perm}(#1)}
\newcommand{\Cl}[1]{\mbox{Cl}(#1)}
\newcommand{\divides}{ \hspace{0mm} \mid }
\newcommand{\ndivides}{ \hspace{0mm} \nmid }
\newcommand{\p}{ {\mathfrak p} } 
\renewcommand{\P}{ {\mathfrak P} } 
\newcommand{\q}{ {\mathfrak q} } 
\renewcommand{\OE}{ {\mathfrak O}_{E}}
\newcommand{\OF}{ {\mathfrak O}_{F}}
\newcommand{\OK}{ {\mathfrak O}_{K}}
\newcommand{\OL}{ {\mathfrak O}_{L}}
\newcommand{\OEp}{ {\mathfrak O}_{E,\p}}
\newcommand{\OEq}{ {\mathfrak O}_{E,\q}}
\newcommand{\OFq}{ {\mathfrak O}_{F,\q}}
\newcommand{\OFp}{ {\mathfrak O}_{F,\p}}
\newcommand{\OKp}{ {\mathfrak O}_{K,\p}}
\newcommand{\OLp}{ {\mathfrak O}_{L,\p}}
\newcommand{\OLP}{ {\mathfrak O}_{L,\P}}
\newcommand{\Tr}{ \mathrm{Tr} }
\newcommand{\A}{ {\mathfrak A}}
\newcommand{\Ap}{ {\mathfrak A}_{\p}}
\newcommand{\AH}{ {\mathfrak A}_{H}}
\newcommand{\M}{ {\mathfrak M} }
\newcommand{\B}{ {\mathfrak B} }
\renewcommand{\bar}{\overline}
\title{Hopf-Galois module structure of tamely ramified radical extensions of prime degree}
\author{Paul J. Truman}
\address{School of Computing and Mathematics, Keele University, Staffordshire, ST5 5BG, UK}
\email{P.J.Truman@Keele.ac.uk}
\subjclass[2010]{Primary 11R33; Secondary 16S40}
\keywords{Hopf-Galois structure, Hopf-Galois module theory, Galois module structure, Associated order}
\begin{document}

\maketitle

\begin{abstract}
Let $ K $ be a number field and let $ L/K $ be a tamely ramified radical extension of prime degree $ p $. If $ K $ contains a primitive $ p\textsuperscript{th} $ root of unity then $ L/K $ is a cyclic Kummer extension; in this case the group algebra $ K[G] $ (with $ G=\Gal{L/K}) $ gives the unique Hopf-Galois structure on $ L/K $, the ring of algebraic integers $ \OL $ is locally free over $ \OK[G] $ by Noether's theorem, and G\'{o}mez Ayala has determined a criterion for $ \OL $ to be a free $ \OK[G] $-module. If $ K $ does not contain a primitive $ p\textsuperscript{th} $ root of unity then $ L/K $ is a separable, but non-normal, extension, which again admits a unique Hopf-Galois structure. Under the assumption that $ p $ is unramified in $ K $, we show that $ \OL $ is locally free over its associated order in this Hopf-Galois structure and determine a criterion for it to be free. We find that the conditions that appear in this criterion are identical to those appearing in G\'{o}mez Ayala's criterion for the normal case.
\end{abstract}

\section{Introduction} \label{section_introduction}

In classical Galois module theory we consider a finite Galois extension $ L/K $ of local or global fields with Galois group $ G $ and study the structure of each fractional ideal $ \B $ of $ L $ as a module over its associated order  $ \A_{K[G]}(\B) \subset K[G] $, with particular emphasis on the case $ \B=\OL $, the ring of algebraic integers (or valuation ring) of $ L $. Hopf-Galois module theory generalizes this situation: a {\em Hopf-Galois structure} on an extension of fields $ L/K $ consists of a $ K $-Hopf algebra $ H $ together with a certain $ K $-linear action of $ H $ on $ L $ (see \cite[Definition 2.7]{Childs00} for the precise definition). If $ L/K $ is an extension of local or global fields and $ H $ gives a Hopf-Galois structure on $ L/K $ then we can study the structure of each fractional ideal $ \B $ of $ L $ as a module over its associated order $ \AH(\B) \subset H $. These techniques have applications to Galois extensions $ L/K $: in this case the group algebra $ K[G] $  (with $ G=\Gal{L/K} $) gives a Hopf-Galois structure on $ L/K $, and any further Hopf-Galois structures admitted by the extension provide alternative contexts in which we can study each fractional ideal (see \cite{Byott97}, for example). However, the application of Hopf-Galois theory to extensions which are not Galois is particularly interesting, since it provides descriptions of rings of algebraic integers and/or fractional ideals in situations where classical techniques do not apply. For example, Hopf-Galois theory has recently been used by Koch \cite{Koch15}  to study the structure of fractional ideals in a totally ramified purely inseparable extension of local fields of prime power degree, and by Elder \cite{Elder18} to address the same questions for a separable, but non-normal, ramified extension of local fields of prime degree. In this paper we study the Hopf-Galois module structure of the ring of algebraic integers in a tamely ramified non-normal radical extension of number fields of prime degree. These are the first results concerning the Hopf-Galois module structure of rings of algebraic integers in non-normal extensions of global fields. 
\\ \\
Let $ K $ be a number field, $ p $ a prime number, and $ \zeta \in \mathbb{C} $ a primitive $ p\textsuperscript{th} $ root of unity. Let $ L=K(\omega) $ with $ \omega^{p} \in K - K^{p} $. If $ \zeta \in K $ then $ L/K $ is a cyclic Kummer extension, and the group algebra $ K[G] $ (with $ G=\Gal{L/K} $) gives a Hopf-Galois structure on $ L/K $. If $ L/K $ is at most tamely ramified (henceforth, ``tame") then Noether's theorem \cite[Theorem 3]{Frohlich83} implies that $ \A_{K[G]}(\OL)=\OK[G] $ and that $ \OL $ is a locally free $ \OK[G] $-module (of rank one), and G\'{o}mez Ayala \cite{GomezAyala94} has determined a criterion for $ \OL $ to be a free $ \OK[G] $-module. By Byott's uniqueness theorem \cite[Theorem 1]{Byott96} the Hopf-Galois structure given by $ K[G] $ is the only Hopf-Galois structure admitted by the extension, and so we have nothing to add to these results. If $ \zeta \not \in K $ then $ L/K $ is a separable, but non-normal, extension. Results of Childs \cite[Section 2]{Childs89} and Kohl \cite[Theorem 3.3]{Kohl98} imply that $ L/K $ also admits a unique Hopf-Galois structure in this case. We show that if $ L/K $ is tame and $ p $ is unramified in $ K $ then $ \OL $ is a locally free module (of rank one) over its associated order in this Hopf-Galois structure and determine a criterion for $ \OL $ to be free. Interestingly, we find that the conditions that appear in this criterion are identical to those that appear in G\'{o}mez Ayala's criterion for the normal case. 
\\ \\
The paper is organized as follows. In section \ref{section_classical_version} we summarize G\'{o}mez Ayala's work on the case in which $ L/K $ is normal; thereafter we assume that $ L/K $ is non-normal. From section \ref{section_tame_radical_extensions} onward we assume that $ p $ is unramified in $ K $; under this assumption we establish a criterion for $ L/K $ to be tame (proposition \ref{prop_criterion_for_tame}), and determine an integral basis of $ \OLp = \OKp \otimes_{\OK} \OL $ over $ \OKp $ for each prime ideal $ \p $ of $ \OK $ in this case (propositions \ref{prop_integral_basis_away_from_p} and \ref{prop_integral_basis_above_p}). In section \ref{section_HGS} we study the unique Hopf-Galois structure admitted by $ L/K $; in particular, we show that the Hopf algebra $ H $ giving this Hopf-Galois structure is isomorphic to $ K^{p} $ as a $ K $-algebra (proposition \ref{prop_K_algebra_isomorphism}) and give a simple formula for its action on $ L $ (proposition \ref{prop_action_of_idempotents}).  In section \ref{section_HGMS} we show that $ \OL $ is locally free over its associated order $ \A = \AH(\OL) $ (theorem \ref{thm_local_generators}), and for each $ \p $ we determine an explicit generator of $ \OLp $ over $ \Ap $. Given that $ \OL $ is a locally free $ \A $-module, a result of Bley and Johnston \cite[Proposition 2.1]{BleyJohnston08} relates the structure of $ \OL $ as an $ \A $-module to the structure of $ \M\OL $ as an $ \M $-module, where $ \M $ denotes the unique maximal order in $ H $; we use this result, along with an id\'{e}lic description of the locally free class group $ \Cl{\M} $, to derive a criterion for  for $ \OL $ to be a free $ \A $-module (theorem \ref{thm_criterion_free}), and show that the conditions appearing in this criterion are identical to those appearing in G\'{o}mez Ayala's criterion for the normal case. Finally, in section \ref{section_uniform} we discuss a unified approach to the normal and non-normal cases. 

\section{G\'{o}mez Ayala's Criterion} \label{section_classical_version}

We retain the notation established in the introduction: $ K $ is a number field, $ p $ a prime number, $ \zeta $ a primitive $ p\textsuperscript{th} $ root of unity, and $ L/K $ is an extension of the form $ L = K(\omega) $ with $ \omega^{p} \in K - K^{p} $. In this section we assume that $ \zeta \in K $; the extension $ L/K $ is then a cyclic Kummer extension and the group algebra $ K[G] $ (with $ G = \Gal{L/K} $) gives the unique Hopf-Galois structure on $ L/K $. We also suppose that $ L/K $ is tame. By Noether's theorem we then have that $ \A_{K[G]}(\OL) = \OK[G] $ and $ \OL $ is a locally free $ \OK[G] $-module.  G\'{o}mez Ayala \cite{GomezAyala94} has determined a criterion for $ \OL $ to be a free $ \OK[G] $-module; we summarize his result using some notation and terminology from \cite{CorsoRossi10}. Each ideal $ \mathfrak{a} $ of $ \OK $ has a unique decomposition of the form
\[ \mathfrak{a} =  \prod_{i \geq 1} \mathfrak{a}_{i}^{i}, \]
where the $ \mathfrak{a}_{i} $ are pairwise coprime squarefree ideals of $ \OF $. (We have $ \mathfrak{a}_{i} = \OK $ for $ i $ sufficiently large, so the product above is finite.) We call the ideal $ \mathfrak{a}_{i} $ the {\em $ i $-part of $ \mathfrak{a} $}, and note that it is the product of the prime ideals $ \p $ of $ \OK $ such that $ v_{\p}(\mathfrak{a}) =i $. Next we define the {\em ideals associated to $ \mathfrak{a} $} by
\[ \mathfrak{b}_{j} = \prod_{i \geq 1} \mathfrak{a}_{i}^{\lfloor ij / p \rfloor} \mbox{ for $ 0 \leq j \leq p-1 $},\]
where $ \lfloor x \rfloor $ denotes the largest integer not exceeding $ x $. We remark that an alternative expression for the $ \mathfrak{b}_{j} $ is
\[ \mathfrak{b}_{j} = \prod_{\p} \p^{\lfloor v_{\p}(\mathfrak{a}^{j}) / p \rfloor} = \prod_{\p} \p^{r_{\p}(\mathfrak{a}^{j})} \mbox{ for $ 0\leq j \leq p-1 $}, \]
where the product is taken over the prime ideals $ \p $ of $ \OK $, and $ r_{\p}(\mathfrak{a}^{j}) = \lfloor v_{\p}(\mathfrak{a}^{j}) / \p \rfloor $.
\\ \\
G\'{o}mez Ayala's result is that $ \OL $ is a free $ \OK[G] $-module if and only if there exists an element $ \beta \in \OL $ such that 
\begin{enumerate}
\item $ L=K(\beta) $, 
\item $ b=\beta^{p} \in \OK $,
\item the ideals  $ \mathfrak{b}_{j} $ associated to $ b\OK $ are principal with generators $ b_{j} $ such that 
\[ \sum_{j=0}^{p-1} \frac{\beta^{j}}{b_{j}} \equiv 0 \pmod{p \OL}. \]
\end{enumerate}
Furthermore, in this case the element
\[ \frac{1}{p} \sum_{j=0}^{p-1} \frac{\beta^{j}}{b_{j}}  \]
is a free generator of $ \OL $ as an $ \OK[G] $-module. 
\\ \\
Several authors have studied generalizations or variants of this result. Ichimura \cite{Ichimura09} proved that if $ p $ is unramified in $ K $ and $ L/K $ is a tamely ramified Galois extension of degree $ p $ then $ L $ has a normal integral basis if the Kummer extension $ L(\zeta)/K(\zeta) $ has a normal integral basis. Ichimura also studied the case in which $ L/K $ is a cyclic Kummer extension of arbitrary degree \cite{Ichimura04}, and a criterion for the existence of a normal integral basis in this case was given by Del Corso and Rossi \cite{CorsoRossi10}. 

\section{Tame radical extensions of prime degree} \label{section_tame_radical_extensions}

Henceforth we suppose that $ \zeta \not \in K $, and write $ F = K(\zeta) $. The extension $ L/K $ is then separable but non-normal. In fact, we impose the stronger hypothesis that $ p $ is unramified in $ K $. We record some consequences of this assumption:  

\begin{lemma} \label{lem_consequences_of_p_unramified_in_K}
Suppose that $ p $ is unramified in $ K $. Then:
\begin{enumerate}
\item $ F/K $ has degree $ p-1 $;
\item each prime ideal $ \p $ of $ \OK $ lying above $ p $ is totally ramified in $ F/K $;
\item the set $  \{1,\zeta, \ldots ,\zeta^{p-2} \} $ is an integral basis of $ \OF $ over $ \OK $. 
\end{enumerate}
\end{lemma}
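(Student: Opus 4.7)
The plan is to deduce all three claims from standard facts about the cyclotomic field $\mathbb{Q}(\zeta)$, using the hypothesis that $p$ is unramified in $K$ to transfer these facts from $\mathbb{Q}(\zeta)/\mathbb{Q}$ to $F/K$. The key inputs are: $[\mathbb{Q}(\zeta):\mathbb{Q}] = p-1$; the rational prime $p$ is totally ramified in $\mathbb{Z}[\zeta]/\mathbb{Z}$ with $(p) = (1-\zeta)^{p-1}$; and $\mathbb{Z}[\zeta]$ is the full ring of integers of $\mathbb{Q}(\zeta)$, with $\{1, \zeta, \ldots, \zeta^{p-2}\}$ as an integral basis, equivalently, the minimal polynomial $\Phi_p(1-y)$ of $1-\zeta$ over $\mathbb{Q}$ is Eisenstein at $p$.

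For parts (1) and (2), I would combine two ramification estimates at any prime $\P$ of $\OF$ lying above a fixed prime $\p$ of $\OK$ dividing $p$. Since $\zeta$ satisfies $\Phi_p(x)$ of degree $p-1$ over $K$, we have $[F:K] \leq p-1$ and hence $e(\P|\p) \leq p-1$. On the other hand, using the tower $\mathbb{Q} \subset \mathbb{Q}(\zeta) \subset F$ and multiplicativity of ramification indices gives $e(\P|p) \geq e((1-\zeta)|p) = p-1$, while using $\mathbb{Q} \subset K \subset F$ together with the hypothesis that $p$ is unramified in $K$ gives $e(\P|p) = e(\P|\p)$. Together these force $[F:K] = p-1$ and $e(\P|\p) = p-1$, yielding both (1) and (2); in particular, $\P$ is the unique prime of $\OF$ above $\p$.

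For part (3), I would check that $\{1, \zeta, \ldots, \zeta^{p-2}\}$ is an $\OKp$-basis of $\OFp$ one prime at a time. For $\p$ not above $p$, the discriminant of $\Phi_p$ over $\mathbb{Z}$ is $\pm p^{p-2}$, which is a unit in $\OKp$; hence $\OKp[\zeta]$ and $\OFp$ have the same discriminant over $\OKp$ and so coincide. For $\p$ above $p$, the polynomial $\Phi_p(1-y)$ is Eisenstein over $\mathbb{Z}$, so its non-leading coefficients lie in $p\mathbb{Z} \subseteq \p$; its constant term is $\Phi_p(1) = p$, which has $\p$-adic valuation exactly one because $p$ is unramified in $K$. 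So $\Phi_p(1-y)$ is Eisenstein at $\p$, and the standard theory of Eisenstein extensions, combined with the equality of degree and ramification index from (2), yields $\OFp = \OKp[1-\zeta] = \OKp[\zeta]$, with the required basis.

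The only real obstacle is the local step at primes above $p$ in part (3): once the Eisenstein property of $\Phi_p(1-y)$ at $\p$ is verified, the conclusion is immediate, but this is precisely the step where the unramified hypothesis is genuinely used, to pin down the $\p$-adic valuation of $p$ as exactly one and hence avoid any worsening of the constant-term condition when passing from $\mathbb{Z}$ to $\OKp$.
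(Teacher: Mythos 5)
Your proof is correct, and its heart at the primes above $p$ is the same as the paper's, but you reach parts (1) and (2) by a genuinely different route. The paper does everything in one local stroke: for $\p$ above $p$ it invokes the Eisenstein property of the cyclotomic polynomial over $K_{\p}$ (citing \cite[Theorem 24]{FrohlichTaylor93}) to obtain simultaneously $[K_{\p}(\zeta):K_{\p}]=p-1$, total ramification, and the $\OKp$-basis $\{1,\zeta,\ldots,\zeta^{p-2}\}$ of $\OFp$; the discriminant $\pm p^{p-2}$ then globalizes the basis at the remaining primes exactly as you do. You instead prove (1) and (2) by sandwiching ramification indices: $e(\P \mid p)\geq p-1$ via the tower $\mathbb{Q}\subset\mathbb{Q}(\zeta)\subset F$, while $e(\P\mid\p)=e(\P\mid p)\leq [F:K]\leq p-1$ via $\mathbb{Q}\subset K\subset F$ and the hypothesis that $p$ is unramified in $K$. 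That argument is valid and pleasantly global, but it is logically redundant in your write-up: once you verify in (3) that $\Phi_{p}(1-y)$ is Eisenstein at $\p$, its irreducibility over $K_{\p}$ already yields the degree and total ramification, which is precisely the paper's economy. A point in your favour: by working with $\Phi_{p}(1-y)$ (root $1-\zeta$, constant term $\Phi_{p}(1)=p$) you state the Eisenstein step correctly, whereas the paper asserts that $x^{p-1}+\cdots+x+1$ itself is Eisenstein over $K_{\p}$, which is literally true only after the change of variable. One small phrasing nit in your prime-to-$p$ case: the correct logic is that the inclusion $\OKp[\zeta]\subseteq\OFp$ together with the unit discriminant forces equality (the square of the module index divides the discriminant); the two orders do not merely ``have the same discriminant'' as a premise.
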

\begin{proof}
Let $ \p $ be a prime ideal of $ K $ lying above $ p $. Since $ p $ in unramified in $ K $ the polynomial $ f(x)=x^{p-1}+x^{p-2}+ \cdots + x + 1 $ is an Eisenstein polynomial over $ K_{\p} $ and has $ \zeta $ as a root, so $ K_{\p}(\zeta) / K_{\p} $ has degree $ p-1 $ and is totally ramified, by \cite[Theorem 24]{FrohlichTaylor93}. Therefore $ F/K $ has degree $ p-1 $ and $ \p $ is totally ramified in $ F/K $. We also have from \cite[Theorem 24]{FrohlichTaylor93} that the set $ \{1,\zeta, \ldots ,\zeta^{p-2} \}  $ is an $ \OKp $-basis of $ \OFp $. Since $ \mathfrak{d}(\{1,\zeta, \ldots ,\zeta^{p-2} \} ) = \pm p^{p-2} $, this set is actually an integral basis of $ \OFp $ over $ \OKp $ for all prime ideals $ \p $ of $ \OK $, and therefore an integral basis of $ \OF $ over $ \OK $. 
\end{proof}

The Galois closure of $ L/K $ is $ E=K(\zeta, \omega) $, and $ E/F $ is a Galois extension of degree $ p $. In this section we use well known results concerning ramification and local integral bases in $ E/F $ to establish a criterion for $ L/K $ to be tame and to determine local integral bases in this case. 

\[
\xymatrixcolsep{2pc} 
\xymatrixrowsep{3pc}
\xymatrix{
& \ar@{-}[dl]_{\textit{Galois, degree $p-1$}} \ar@{-}[dr]^{\textit{Galois, degree $p$}} E & \\
L=K(\omega) \ar@{-}[dr]_{\textit{degree $p$}}& & F=K(\zeta) \ar@{-}[dl]^{\textit{Galois, degree $p-1$}} \\
& K & } 
\]

\begin{proposition} \label{prop_translate_tameness}
$ L/K $ is tame if and only if $ E / F $ is tame.
\end{proposition}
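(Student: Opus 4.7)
The plan is to reduce tameness of both extensions to checking that no ramification occurs at primes above $p$, and then to compare ramification indices in the two towers $K \subset L \subset E$ and $K \subset F \subset E$ using multiplicativity.

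First I would note that since both $L/K$ and $E/F$ have prime degree $p$, any ramification index in either extension lies in $\{1, p\}$. At a prime of $\OK$ (respectively $\OF$) not lying above $p$, the residue characteristic is different from $p$, so both possible ramification indices are coprime to it; hence tameness is automatic there. Therefore tameness of $L/K$ is equivalent to the statement that no prime of $\OK$ above $p$ is ramified in $L/K$, and similarly for $E/F$.

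Next I would fix a prime $\p$ of $\OK$ above $p$. By Lemma \ref{lem_consequences_of_p_unramified_in_K}(2), there is a unique prime $\q$ of $\OF$ above $\p$, with $e(\q/\p) = p-1$. Fixing a prime $\P$ of $\OE$ above $\q$ and setting $\mathfrak{Q} = \P \cap \OL$, multiplicativity of ramification indices in the two towers yields
\[ e(\P/\q)(p-1) \;=\; e(\P/\p) \;=\; e(\P/\mathfrak{Q})\,e(\mathfrak{Q}/\p), \]
where $e(\P/\q), e(\mathfrak{Q}/\p) \in \{1,p\}$ and $e(\P/\mathfrak{Q})$ divides $[E:L] = p-1$. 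If $e(\P/\q) = 1$, then $(p-1) = e(\P/\mathfrak{Q})\,e(\mathfrak{Q}/\p)$, and since $\gcd(p,p-1)=1$ we must have $e(\mathfrak{Q}/\p)=1$. Conversely, if $e(\P/\q) = p$, then $p(p-1) = e(\P/\mathfrak{Q})\,e(\mathfrak{Q}/\p)$, and since the right-hand factors are bounded by $p-1$ and $p$ respectively, both bounds must be attained, forcing $e(\mathfrak{Q}/\p) = p$. Thus $E/F$ is ramified at $\q$ if and only if $L/K$ is ramified at $\p$, which combined with the reduction from the first paragraph gives the claim.

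The main subtlety is that $L/K$ is non-normal, so a priori different primes of $\OL$ above $\p$ could have different ramification indices; however, since $[L:K]=p$ is prime, the relation $\sum_i e_i f_i = p$ forces that if any $e_i$ equals $p$ then there is only one prime above $\p$ (totally ramified case), while otherwise all $e_i = 1$. Consequently the phrase ``$L/K$ is ramified at $\p$'' is unambiguous regardless of which prime $\mathfrak{Q}$ above $\p$ one picks, so the argument via the single chosen $\P$ suffices to compare the two tameness conditions globally.
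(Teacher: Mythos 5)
There is a genuine gap. Your opening claim that ``any ramification index in either extension lies in $\{1,p\}$'' is false for the non-normal extension $L/K$, and the justification in your final paragraph --- that $\sum_i e_i f_i = p$ forces each $e_i \in \{1,p\}$ --- does not hold: the relation also permits, for instance, $e_1 f_1 = 1$ together with $e_2 = p-1$, $f_2 = 1$. This is not a hypothetical worry; it happens in exactly the situation of the paper. Take $p=5$, $K = \mathbb{Q}$, $a = 26$, $L = \mathbb{Q}(26^{1/5})$. Since $26 \equiv 1 \pmod{25}$ and the fifth powers in $\mathbb{Z}_5^{\times}$ are precisely $\mu_4 \cdot (1+25\mathbb{Z}_5)$, the element $26$ is a fifth power $b^5$ in $\mathbb{Z}_5^{\times}$, so over $\mathbb{Q}_5$ the polynomial $x^5 - 26$ factors as $(x-b)$ times an irreducible quartic whose roots $\zeta^i b$ generate $\mathbb{Q}_5(\zeta)$; hence $5\OL = \mathfrak{Q}_1 \mathfrak{Q}_2^4$ with $e(\mathfrak{Q}_2/5) = 4$. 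Note that $a \equiv 1 \pmod{p^2}$, so $L/K$ is tame by proposition \ref{prop_criterion_for_tame}, yet it \emph{is} ramified above $p$, with ramification index $4 \notin \{1,5\}$, and the two primes above $5$ have different ramification indices. This breaks your argument at three points: the first-paragraph reduction (``tame $\Leftrightarrow$ unramified above $p$'') fails, since $e=4$ is ramified but tame; in your case $e(\P/\q)=1$ you conclude $e(\mathfrak{Q}/\p)=1$, whereas in the example $\q$ splits completely in $E/F$ (because $26 \in F_{\q}^{5}$) while $e(\mathfrak{Q}_2/\p)=4$; and the final paragraph's dichotomy (all $e_i=1$ or a single totally ramified prime) is false. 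The biconditional you prove, ``$E/F$ ramified at $\q$ iff $L/K$ ramified at $\p$,'' is simply not true --- only the equivalence of \emph{wild} ramification holds.

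Your tower computation can be repaired by comparing the exact powers of $p$ dividing the ramification indices rather than asserting dichotomies: since $e(\q/\p)=p-1$ and $e(\P/\mathfrak{Q})$, which divides $[E:L]=p-1$ because $E/L$ is Galois of degree $p-1$, are both prime to $p$, the two factorizations of $e(\P/\p)$ show that $p \divides e(\P/\q)$ if and only if $p \divides e(\mathfrak{Q}/\p)$; so $E/F$ is wildly ramified at $\q$ exactly when $L/K$ is wildly ramified above $\p$. Away from $p$ you also need an argument --- prime degree alone does not bound the $e_i$ --- namely that $E/L$ is unramified at such primes (as $E = L(\zeta)$ ramifies only above $p$), which forces $e(\mathfrak{Q}/\p) = e(\P/\q) \in \{1,p\}$, prime to the residue characteristic. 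At that point you have essentially reconstructed the paper's proof, which packages all of this more cleanly and without any prime-by-prime analysis: $F/K$ and $E/L$ are tame because they are Galois of degree $p-1$ and ramified only above $p$ (so every ramification index divides $p-1$ and is prime to the residue characteristic there), and tameness passes up and down towers, giving $L/K$ tame $\Rightarrow$ $E/K$ tame $\Rightarrow$ $E/F$ tame, and conversely.
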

\begin{proof}
The extension $ F/K $ is tame since it is Galois of degree $ p-1 $ and is ramified only at prime ideal lying above $ p $; similarly $ E/L $ is tame. If $ L/K $ is tame then since $ E/L $ is tame we have that $ E/K $ is tame, and so $ E/F $ is tame. Conversely, if $ E/F $ is tame then since $ F/K $ is tame we have that $ E/K $ is tame, and so $ L/K $ is tame. 
\end{proof}

\begin{proposition} \label{prop_criterion_for_tame}
$ L/K $ is tame if and only if there exists $ \alpha \in \OL $ such that
\begin{enumerate}
\item $ L=K(\alpha) $;
\item $ \alpha^{p} \equiv 1 \pmod{p^{2}\OK} $.
\end{enumerate}
\end{proposition}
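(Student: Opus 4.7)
The plan is to reduce tameness of $L/K$ to a condition at primes of $K$ above $p$, since $[L:K]=p$ is coprime to the residue characteristic elsewhere. By Proposition \ref{prop_translate_tameness} this amounts to analyzing tameness of the Kummer extension $E/F$ at each $\Q \mid p$ in $F$; by Lemma \ref{lem_consequences_of_p_unramified_in_K}, $v_\Q(p) = p - 1$ and $F_\Q/K_\p$ is totally ramified of degree $p - 1$.

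For the ``if'' direction, suppose $\alpha \in \OL$ satisfies the two conditions and set $a := \alpha^p$. At each $\Q \mid p$ one obtains $v_\Q(a - 1) \geq 2(p - 1) > v_\Q(p)/(p - 1) = 1$, which is the threshold for the binomial series $(1 + (a - 1))^{1/p}$ to converge in $F_\Q$. This exhibits $a$ as a $p$-th power in $F_\Q$, so the Kummer extension $F_\Q(\alpha)/F_\Q$ is trivial, $E/F$ splits (and in particular is unramified) at $\Q$, and hence $L/K$ is tame at $\p = \Q \cap K$.

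For the ``only if'' direction, I would take any $\omega \in \OL$ generating $L/K$, set $c_0 = \omega^p \in \OK$, and work locally at each $\p \mid p$. Because Frobenius is bijective on residue fields, any nontrivial Kummer extension of $F_\Q$ of degree $p$ has trivial residue extension, hence is totally (and wildly) ramified; tameness of $E/F$ at $\Q \mid p$ therefore forces $c_0 \in (F_\Q^*)^p$. Since $x^p - c_0$ factors over $K_\p$ either as an irreducible of degree $p$ or as a linear factor times an irreducible of degree $p - 1$, and any root $b \in F_\Q$ has $[K_\p(b) : K_\p]$ dividing $[F_\Q : K_\p] = p - 1$, the first option is excluded; thus $c_0 \in (K_\p^*)^p$, and I fix $b_\p \in K_\p^*$ with $b_\p^p = c_0$. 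By weak approximation I would find $c \in K^*$ with $c b_\p \equiv 1 \pmod{\p}$ at each $\p \mid p$, arranged so that $\alpha := c\omega \in \OL$. Because $p$ is unramified in $K$ and $\zeta \notin K_\p$, a short filtration argument shows that the $p$-th power map is an isomorphism $U^{(1)}_{K_\p} \to U^{(2)}_{K_\p}$; applying this to $\alpha^p = (cb_\p)^p$ gives $\alpha^p \equiv 1 \pmod{\p^2}$ at each $\p \mid p$, which combines to $\alpha^p \equiv 1 \pmod{p^2\OK}$.

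The main technical obstacle will be ensuring integrality of $\alpha$ alongside the $p$-adic approximation conditions. My plan is to first rescale $\omega$ by an element of $K^*$ so that $v_\p(c_0) = 0$ at every $\p \mid p$ (possible in the tame case, where $p \mid v_\p(c_0)$), then to impose in the weak approximation step that $c$ is additionally a unit at the finitely many non-$p$-primes in the support of $c_0$, and finally to correct any residual negative valuations of $c$ by multiplying by an element $d \in \OK$ with $d \equiv 1 \pmod{\p}$ at each $\p \mid p$; this adjustment preserves the congruence $\alpha^p \equiv 1 \pmod{\p^2}$ because $d \equiv 1 \pmod{\p}$ forces $d^p \equiv 1 \pmod{\p^2}$ via the same $p$-th power map.
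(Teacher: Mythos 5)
Your ``if'' direction reaches the right conclusion but misstates the key threshold: the binomial series $(1+x)^{1/p}$ converges in $F_{\q}$ only when $v_{\q}(x) > \frac{p}{p-1}\,v_{\q}(p) = p$, not when $v_{\q}(x) > v_{\q}(p)/(p-1) = 1$ (that smaller bound is the $\log$/$\exp$ threshold; if it sufficed, nearly every such extension would be unramified above $p$). Your hypothesis still clears the correct bar, because $\zeta \notin K$ forces $p \geq 3$, whence $v_{\q}(a-1) \geq 2(p-1) \geq p+1 > p$; this is exactly the content of \cite[Propositions (24.2) and (24.4)]{Childs00}, which the paper invokes directly at this point.

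The genuine gap is in your ``only if'' direction. The lemma you lean on --- that any nontrivial Kummer extension of $F_{\q}$ of degree $p$ has trivial residue extension and hence is totally ramified --- is false: since $\zeta \in F_{\q}$, \emph{every} cyclic degree-$p$ extension of $F_{\q}$ is Kummer, including the unramified one, which is $F_{\q}(u^{1/p})$ for a unit $u$ with $v_{\q}(u-1)$ at the critical level $\frac{p}{p-1}v_{\q}(p)=p$, and its residue extension has degree $p$. (The fallacy is that the residue extension of $F_{\q}(u^{1/p})$ need not be generated by the reduction of $u^{1/p}$; surjectivity of Frobenius on the residue field says nothing about it.) So tameness of $E/F$ at $\q$ does not, by your argument, force $c_{0} \in (F_{\q}^{\times})^{p}$: a priori $\q$ could be inert in $E/F$. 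Your intermediate claim is nevertheless \emph{true}, but for a subtler reason your proof must supply: tameness forces $p \mid v_{\p}(c_{0})$, the Teichm\"{u}ller component of the unit part of $c_{0}$ is a $p$-th power in $K_{\p}$ (here, and only here, does bijectivity of Frobenius genuinely enter), and since $F_{\q}/K_{\p}$ is totally ramified of degree $p-1$, the principal-unit part of $c_{0}$ has $v_{\q}$-level in $(p-1)\mathbb{Z}$, which misses the critical level $p$; ruling out level $p-1$ (ramified) leaves level $\geq 2(p-1) > p$, so $c_{0} \in (U^{(1)}_{K_{\p}})^{p}$ directly --- which also makes your detour through $F_{\q}$ and the factorization dichotomy for $x^{p}-c_{0}$ unnecessary. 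With this lemma repaired, your route (rescaling one fixed global radical generator $\omega$ by elements $c, d \in K^{\times}$ found via weak approximation and CRT, plus the isomorphism $(\,\cdot\,)^{p}\colon U^{(1)}_{K_{\p}} \to U^{(2)}_{K_{\p}}$) does go through and is genuinely different from the paper's, which instead quotes Childs's local congruence generators $\beta_{\q}$ at each $\q \mid p$, patches them by CRT into a global $\beta \in \OE$ with $\beta^{p} \equiv 1 \pmod{(\zeta-1)^{p}\OF}$, and descends to $L$ via the norm $\alpha = N_{E/L}(\beta)$.
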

\begin{proof}
Suppose first that there exists $ \alpha \in \OL $ with the properties stated in the proposition. Then (since $ (\zeta-1)^{p-1}\OF=p\OF $) we have $ E = F(\alpha) $ with $ \alpha^{p} \equiv 1 \pmod{(\zeta-1)^{p}\OF} $ and so by \cite[Propositions (24.2) and (24.4)]{Childs00} each prime ideal $ \q $ of $ \OF $ lying above $ p $ is unramified in $ E $. Since $ E/F $ is a Galois extension of degree $ p $ this implies that $ E/F $ is tame, and so $ L/K $ is tame by proposition \ref{prop_translate_tameness}.

Conversely, suppose that $ L/K $ is tame. Then by proposition \ref{prop_translate_tameness} $ E/F $ is tame, and so by \cite[Propositions (24.2) and (24.4)]{Childs00}  for each prime ideal $ \q $ of $ \OF $ lying above $ p $ there exists $ \beta_{\q} \in \OEq $ such that $ E_{\q} = F_{\q}(\beta_{\q}) $ and $ \beta_{\q}^{p} \equiv 1 \pmod{(\zeta-1)^{p}\OFq} $. By the Chinese Remainder Theorem there exists $ \beta \in \OE $ such that $ E = F(\beta) $ and $ \beta^{p} \equiv 1 \pmod{(\zeta-1)^{p}\OF} $. Let $ \alpha = N_{E/L}(\beta) $; then $ L=K(\alpha) $ and $ \alpha^{p} \equiv 1 \pmod{(\zeta-1)^{p}\OF} $. But $ \alpha^{p} \in \OF \cap \OL = \OK $, so (again using the fact that $ (\zeta-1)^{p-1}\OF=p\OF  $) we have $ \alpha^{p} \equiv 1 \pmod{p^{2}\OK} $.

\end{proof}

Henceforth we shall suppose that $ L/K $ is tame and that $ L=K(\alpha) $ with $ a = \alpha^{p} \equiv 1 \pmod{p^{2}\OK} $. We now determine integral bases of $ \OLp $ over $ \OKp $ for each prime ideal $ \p $ of $ \OK $. 

\begin{proposition} \label{prop_integral_basis_away_from_p}
Let $ \p $ be a prime ideal of $ \OK $ that does not lie above $ p $, and let $ \pi_{\p} $ be a uniformizer of $ K_{\p} $. For $ x \in K $ let $ {\displaystyle  r_{\p}(x) = \left\lfloor \frac{v_{\p}(x)}{p} \right\rfloor } $. Then an integral basis of $ \OLp $ over $ \OKp $ is given by 
\[ \left\{ \frac{ \alpha^{j} }{ \pi_{\p}^{r_{\p}(a^{j})}} \suchthat j=0,1, \ldots ,p-1 \right\}. \]
\end{proposition}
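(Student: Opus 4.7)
The plan is to verify that each $\alpha_j := \alpha^j/\pi_\p^{r_\p(a^j)}$ lies in $\OLp$, and then to show the $\alpha_j$ form an $\OKp$-basis by splitting into the cases $p \mid v_\p(a)$ and $p \nmid v_\p(a)$. For integrality, the element $\alpha_j^p = a^j/\pi_\p^{p\,r_\p(a^j)}$ has $\p$-adic valuation $v_\p(a^j) - p\lfloor v_\p(a^j)/p\rfloor \geq 0$, hence lies in $\OKp$; so $\alpha_j$ is integral over $\OKp$ and belongs to $\OLp$. The $\alpha_j$ are $K$-linearly independent, being nonzero $K$-multiples of $1, \alpha, \ldots, \alpha^{p-1}$.

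Suppose first that $p\mid v_\p(a)$. Writing $n = v_\p(a)/p$ gives $r_\p(a^j) = jn$, so $\alpha_j = \beta^j$ with $\beta := \alpha/\pi_\p^n$ and $u := \beta^p = a/\pi_\p^{pn} \in \OKp^{\times}$. The discriminant of $\{1,\beta,\ldots,\beta^{p-1}\}$ is $\mathrm{disc}(x^p - u) = \pm\, p^p u^{p-1}$, a unit at $\p$ since $\p\nmid p$. Because the discriminant of $\OLp/\OKp$ divides this, it too is a unit, forcing $\OKp[\beta] = \OLp$ and showing that $\{1,\beta,\ldots,\beta^{p-1}\}$ is an $\OKp$-basis of $\OLp$.

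Suppose now that $p\nmid v_\p(a)$. For any prime $\P$ of $L$ above $\p$, applying $v_\P$ to $\alpha^p = a$ yields $v_\P(\alpha) = e(\P/\p)\,v_\p(a)/p$; as this must be an integer and $\gcd(v_\p(a),p)=1$, we require $p\mid e(\P/\p)$, so $e(\P/\p)=p$ and $\P$ is the unique (totally ramified) prime of $L$ above $\p$. Normalising so that $v_\P(\pi_\p)=p$, we obtain
\[ v_\P(\alpha_j) \;=\; j\,v_\p(a) - p\,r_\p(a^j) \;\equiv\; j\,v_\p(a) \pmod{p}, \]
and since $\gcd(v_\p(a),p)=1$ these values form a permutation of $\{0,1,\ldots,p-1\}$ as $j$ varies over $\{0,1,\ldots,p-1\}$. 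By the standard structure theorem for totally ramified extensions of complete discrete valuation rings, any $p$ elements of $\OLP = \OLp$ whose $v_\P$-values are $\{0,1,\ldots,p-1\}$ form an $\OKp$-basis, completing the proof.

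The main technical point is the total ramification argument in the second case; the rest is careful bookkeeping of $\p$-adic valuations and a routine discriminant comparison.
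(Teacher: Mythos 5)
Your proof is correct, and although it ends in the same two places as the paper's --- a unit-discriminant argument when $ p \divides v_{\p}(a) $ and a count of the valuations $ 0,1,\ldots,p-1 $ at the unique prime above $ \p $ when $ p \ndivides v_{\p}(a) $ --- your route to the intermediate ramification facts is genuinely more elementary. The paper works through the Galois closure: it compares $ e_{\p}(E/K) $, $ e_{\q}(E/F) $ and $ e_{\p}(F/K) $ and invokes Hecke's Theorem 118 on the Kummer extension $ E/F $ to conclude that $ \p $ is unramified in $ L/K $ when $ p \divides v_{\p}(a) $ and totally ramified when $ p \ndivides v_{\p}(a) $. You obtain total ramification in the second case entirely inside $ L/K $, from $ p\,v_{\P}(\alpha) = e(\P/\p)\,v_{\p}(a) $ together with $ \gcd(v_{\p}(a),p)=1 $, and in the first case you need no ramification input at all: the substitution $ \beta = \alpha/\pi_{\p}^{n} $ with $ u = \beta^{p} \in \OKp^{\times} $ and the explicit computation $ \mathrm{disc}(x^{p}-u) = \pm\, p^{p}u^{p-1} \in \OKp^{\times} $ prove simultaneously that $ \{1,\beta,\ldots,\beta^{p-1}\} $ is an integral basis and that $ \p $ is unramified, via the standard relation between the discriminant of the order $ \OKp[\beta] $ and that of the maximal order (the paper's corresponding step checks instead that each basis element has valuation zero at every $ \P \divides \p $ and then asserts the discriminant is a unit, a terser version of the same idea, and this works verbatim in the semilocal ring $ \OLp $ since $ L \otimes_{K} K_{\p} \cong K_{\p}[x]/(x^{p}-a) $). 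Your opening integrality check via $ \alpha_{j}^{p} = a^{j}/\pi_{\p}^{p\,r_{\p}(a^{j})} \in \OKp $ is likewise self-contained, where the paper reads integrality off the same valuation data. What your argument buys is independence from the normal-closure apparatus ($ E $, $ F $, Hecke); what the paper's buys is economy in context, since that $ E/F $ infrastructure is already in place for the tameness criterion of proposition \ref{prop_criterion_for_tame}.
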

\begin{proof}
The ramification indices of $ \p $ in $ F/K $ and $ E/K $ depend only on $ \p $, because $ F/K $ and $ E/K $ are Galois extensions. Let $ \q $ be a prime ideal of $ \OF $ that lies above $ \p $. Then we have $ e_{\p}(E/K) = e_{\q}(E/F)e_{\p}(F/K)  $, but $ e_{\p}(F/K) = 1 $ since $ \p $ does not lie above $ p $, and $ e_{\q}(E/F) = 1 $ or $ p $, since $ E/F $ is a Galois extension of degree $ p $. Hence $ e_{\p}(E/K) = 1 $ or $ p $. Now let $ \P $ be a prime ideal of $ \OL $ lying above $ \p $. Then $ \P $ is unramified in $ E/L $, since $ E=L(\zeta) $ and $ \P $ does not lie above $ p $. Therefore $ \p $ is either unramified or totally ramified in $ L/K $, according to whether $ \q $ is unramified or totally ramified in $ E/F $. By \cite[Theorem 118]{Hecke81}, $ \q $ is unramified in $ E/F $ if $ p \divides v_{\q}(a) $, and totally ramified if $ p \ndivides v_{\q}(a) $. Since $ \p $ is unramified in $ F/K $ we conclude that $ \p $ is unramified in $ L/K $ if $ p \divides v_{\p}(a) $, and totally ramified if $ p \ndivides v_{\p}(a) $. 
\\ \\
If $ p \divides v_{\p}(a) $ then for each prime ideal $ \P $ of $ \OL $ lying above $ \p $  and each $ j=0,1, \ldots ,p-1 $  we have $ r_{\p}(a^{j}) = v_{\P}(\alpha^{j}) $, so $ v_{\P} \left(  \alpha^{j} / \pi_{\p}^{r_{\p}(a^{j})} \right) = 0 $ (since $ v_{\P}(\pi_{\p})=1 $). Therefore the discriminant of the set in the proposition lies in $ \OKp^{\times} $, and so this set is an $ \OKp $-basis of $ \OLp $. 
\\ \\
If $ p \ndivides v_{\p}(a) $ then let $ \P $ be the unique prime ideal of $ \OL $ that lies above $ \p $;  we then have $ v_{\P}(\pi_{\p})=p $ and $ v_{\P}(\alpha)=v_{\p}(a) $, so for each $ j=0,1, \ldots ,p-1 $
\[ v_{\P} \left(  \frac{\alpha^{j}} {\pi_{\p}^{r_{\p}(a^{j})}} \right) = v_{\p}(a^{j}) - pr_{\p}(a^{j}). \]
Therefore for each $ j=0,1, \ldots ,p-1 $ the set in the proposition contains exactly one element whose valuation at $ \P $ is equal to $ j $, and so this set is an $ \OKp $-basis of $ \OLP = \OLp $. 
\end{proof}

\begin{proposition} \label{prop_integral_basis_above_p}
Let $ \p $ be a prime ideal of $ \OK $ that lies above $ p $. Then an integral basis of $ \OLp $ over $ \OKp $ is given by
\[ \left\{ 1, \alpha, \ldots ,\alpha^{p-2}, \frac{1}{p}\left(1+\alpha+ \cdots +\alpha^{p-1} \right) \right\}. \]
\end{proposition}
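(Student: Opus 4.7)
The plan is to verify that $\gamma := \frac{1}{p}(1 + \alpha + \cdots + \alpha^{p-1})$ lies in $\OL$ and then to compare the $\p$-adic valuation of the discriminant of the proposed set against that of the relative discriminant $\mathfrak{d}(\OL/\OK)$. Once the two valuations coincide, the index $[\OLp : \langle 1, \alpha, \ldots, \alpha^{p-2}, \gamma \rangle_{\OKp}]$ is forced to be a $\p$-adic unit, and the proposed set must be an integral basis.

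First I would establish that $\gamma \in \OL$. Writing $a - 1 = p^2 c$ with $c \in \OK$ (possible by hypothesis), the identity $(\alpha - 1)(1 + \alpha + \cdots + \alpha^{p-1}) = \alpha^p - 1 = p^2 c$ rearranges to $\alpha = 1 + pc/\gamma$. Substituting into $\alpha^p = 1 + p^2 c$ and clearing denominators yields the monic equation
\[ \gamma^p - \sum_{k=1}^{p} \binom{p}{k} p^{k-2} c^{k-1} \gamma^{p-k} = 0 \]
satisfied by $\gamma$; since $p \mid \binom{p}{k}$ for $1 \leq k \leq p-1$, every coefficient lies in $\OK$, so $\gamma \in \OL \subseteq \OLp$.

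Next I would compute the discriminant of the proposed set. From
\[ \mathfrak{d}(\{1, \alpha, \ldots, \alpha^{p-1}\}) = (-1)^{p(p-1)/2} N_{L/K}(p\alpha^{p-1}) = (-1)^{p(p-1)/2} p^p a^{p-1}, \]
together with the observation that the change-of-basis matrix from $\{1, \alpha, \ldots, \alpha^{p-1}\}$ to $\{1, \alpha, \ldots, \alpha^{p-2}, \gamma\}$ is upper triangular with diagonal $(1, \ldots, 1, 1/p)$, we obtain $\mathfrak{d}(\{1, \alpha, \ldots, \alpha^{p-2}, \gamma\}) = (-1)^{p(p-1)/2} p^{p-2} a^{p-1}$, of $\p$-adic valuation $p-2$ (since $a$ is a unit at $\p$).

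The crux is to show that $v_{\p}(\mathfrak{d}(\OL/\OK)) = p-2$, which I would do by pinning down the decomposition of $\p$ in $L$ via Hensel's lemma. The Newton iterate $\alpha_1 := 1 + (a-1)/p \in \OKp$ satisfies $v_{\p}(f(\alpha_1)) \geq 3 > 2 = 2 v_{\p}(f'(\alpha_1))$ for $f(x) = x^p - a$, so Hensel produces a root $\alpha_0 \in \OKp$ and a factorization $x^p - a = (x - \alpha_0) g(x)$ over $K_{\p}$. The remaining roots $\{\zeta^i \alpha_0\}_{i=1}^{p-1}$ lie in $F_{\p}$; since $[F_{\p}:K_{\p}] = p-1$ and $\mathrm{Gal}(F_{\p}/K_{\p})$ acts transitively on the non-trivial $p$-th roots of unity (Lemma \ref{lem_consequences_of_p_unramified_in_K}), $g$ is irreducible and $K_{\p}[x]/(g) \cong F_{\p}$. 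Hence $L \otimes_K K_{\p} \cong K_{\p} \times F_{\p}$ and $\OLp \cong \OKp \times \OFp$, so Lemma \ref{lem_consequences_of_p_unramified_in_K} gives $v_{\p}(\mathfrak{d}(\OL/\OK)) = v_{\p}(\mathfrak{d}(\OF/\OK)) = p-2$, matching the earlier discriminant computation and completing the proof. The main obstacle will be the local identification $K_{\p}[x]/(g) \cong F_{\p}$, which relies on the transitive action of $\mathrm{Gal}(F_{\p}/K_{\p})$ on primitive $p$-th roots of unity.
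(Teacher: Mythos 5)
Your proof is correct, but it takes a genuinely different route from the paper's. The paper descends from the Galois closure: it starts from Childs's explicit $\OFq$-basis $\left\{ \left( (\alpha-1)/(\zeta-1) \right)^{j} \right\}$ of $\OEq$, uses tameness of $E/L$ to get $\OL = \Tr_{E/L}(\OE)$, and then computes the traces of the resulting $\OKp$-basis of $\mathfrak{O}_{E,\p}$ term by term (a binomial-coefficient calculation) to show that the stated set spans $\OLp$. You instead work entirely inside $L_{\p}$: you verify integrality of $\gamma = \frac{1}{p}(1+\alpha+\cdots+\alpha^{p-1})$ via an explicit monic equation over $\OK$ (your coefficient check is right: $\binom{p}{k}p^{k-2} \in p^{k-1}\OK$ for $1 \leq k \leq p-1$, and the $k=p$ term is $p^{p-2}c^{p-1}$), then use Hensel's lemma to split $x^{p}-a = (x-\alpha_{0})g(x)$ over $K_{\p}$, with $g$ irreducible because each root $\zeta^{i}\alpha_{0}$ generates $K_{\p}(\zeta)$ of degree $p-1 = \deg g$ (transitivity of the Galois action is not even needed for this), giving $\OLp \cong \OKp \times \OFq$ and hence $v_{\p}(\mathfrak{d}(\OLp/\OKp)) = p-2$; since this matches the valuation $p-2$ of the discriminant of your candidate set, the index is a unit and the set is a basis. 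Both discriminant computations check out ($\mathfrak{d}(x^{p}-a)$ has valuation $p$ at $\p$ since $a \equiv 1 \pmod{\p}$, and the change-of-basis determinant is $1/p$). What each approach buys: yours avoids the Galois closure, the trace-surjectivity input, and the binomial trace computations, and it makes the local structure explicit -- $\p$ has one unramified prime and one tamely ramified prime with $e = p-1$ above it in $L$, which is illuminating and not visible in the paper's argument; the paper's method is constructive within the framework (Childs's Propositions 24.2 and 24.4) already deployed in Propositions \ref{prop_criterion_for_tame} and \ref{prop_integral_basis_away_from_p}, so it requires no new tools. Two minor points you should make explicit: the divisions by $\gamma$ and by $c$ require $c \neq 0$, which is immediate since $a \notin K^{p}$ forces $a \neq 1$; and Hensel's lemma should be applied in the completion $K_{\p}$, consistent with the paper's conventions for $\OKp$ and $\OLp$.
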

\begin{proof}
Let $ \q $ be the unique prime ideal of $ \OF $ that lies above $ \p $. By \cite[Proposition 24.4]{Childs00} an $ \OFq $-basis of $ \mathfrak{O}_{E,\q} $ is given by
\[ \left\{ \left( \frac{ \alpha-1 }{ \zeta -1 } \right)^{j} \suchthat j=0,1, \ldots ,p-1 \right\}. \]
Since $ v_{\q}(p)=p-1 $, it follows that an $ \OFq $-basis of $ \mathfrak{O}_{E,\q} $ is given by
\[ \left\{ \frac{(\alpha-1)^{j}}{p} (\zeta-1)^{p-1-j} \suchthat j=0,1, \ldots ,p-1 \right\} \]
and (using lemma \ref{lem_consequences_of_p_unramified_in_K}) that an $ \OKp $-basis of $ \mathfrak{O}_{E,\p} $ is given by 
\[ \left\{  \frac{(\alpha-1)^{j}}{p} \zeta^{i}(\zeta-1)^{p-1-j}  \suchthat \begin{array}{l} i=0,1, \ldots ,p-2 \\ j=0,1, \ldots ,p-1 \end{array} \right\}. \]
Since $ E/L $ is tamely ramified, we have $ \OL = \Tr_{E/L}(\OE) $, and so $ \OLp $ is spanned over $ \OKp $ by the images of the elements of this set under the map $ \Tr = \Tr_{E/L} $. Since $ [F:K]=p-1 $ by lemma \ref{lem_consequences_of_p_unramified_in_K}, we have
\[ \Tr(\zeta^{r}) = \left\{ \begin{array}{cl} p-1 & \mbox{if } r \equiv 0 \pmod{p} \\ -1 & \mbox{otherwise.} \end{array} \right. \] 
Now for $ i,j=0,1, \ldots p-2 $ we have
\begin{eqnarray*}
\Tr \left( \frac{(\alpha-1)^{j}}{p} \zeta^{i}(1-\zeta)^{p-1-j} \right) & = & \frac{(\alpha-1)^{j}}{p} \sum_{k=0}^{p-1-j} \binom{p-1-j}{k} \Tr\left( \zeta^{i}(-\zeta)^{k} \right) \\
& = &  \frac{(\alpha-1)^{j}}{p} \left( (-1)^{p-i}  \binom{p-1-j}{p-i}p - \sum_{k=0}^{p-1-j} \binom{p-1-j}{k} (-1)^{k} \right) \\ 
& = &  (-1)^{p-i}  \binom{p-1-j}{p-i}(\alpha-1)^{j} ,
\end{eqnarray*}
where we interpret the binomial coefficient $ \binom{p-1-j}{p-i} $ as zero if $ p-i > p-1-j $. The $ \OKp $-span of these elements is equal to the $ \OKp $-span of $ 1,\alpha, \ldots ,\alpha^{p-2} $. For $ i=0,1, \ldots ,p-2 $ and $ j=p-1 $ we have
\[ \Tr \left( \frac{(\alpha-1)^{p-1}}{p} \zeta^{i} \right) = \left\{ \begin{array}{ll} (\alpha-1)^{p-1} - \frac{(\alpha-1)^{p-1}}{p}& \mbox{if $ i=0 $} \\ - \frac{(\alpha-1)^{p-1}}{p} & \mbox{otherwise.} \end{array} \right. \]
Therefore $ \frac{(\alpha-1)^{p-1}}{p} \in \OLp $ and, using the fact that $ \binom{p-1}{k} \equiv (-1)^{k} \pmod{p} $, we have \linebreak $ \frac{1}{p}\left(1+\alpha+ \cdots +\alpha^{p-1} \right) \in \OLp $. Therefore the set in the proposition spans $ \OLp $ over $ \OKp $. Since this set is clearly linearly independent over $ \OKp $, it  forms an $ \OKp $-basis of $ \OLp $.
\end{proof}

\section{The Hopf-Galois structure on a  radical extension of prime degree} \label{section_HGS}

As discussed in the introduction, the extension $ L/K $ admits a unique Hopf-Galois structure. This fact is established in, for example, \cite[Section 2]{Childs89} or \cite[Theorem 3.3]{Kohl98}, but we give a self contained proof in our case for the convenience of the reader. We also establish some properties of this Hopf-Galois structure, which will be useful in what follows. 
\\ \\
We rewrite the Galois closure of $ L/K $ as $ E= K(\alpha,\zeta) $; we then have $ \Gal{E/K} = \langle \sigma, \tau \rangle $, where $ \sigma(\alpha) = \zeta \alpha $, $ \sigma(\zeta)=\zeta $, $ \tau(\alpha) = \alpha $, and $ \tau(\zeta) = \zeta^{d} $ for some primitive root $ d $ modulo $ p $. We have $ \sigma^{p} = \tau^{p-1}=1 $ and $ \tau \sigma \tau^{-1} = \sigma^{d} $. Let $ G = \Gal{E/K} $, $ G^{\prime}=\langle \tau \rangle $, and let $ X $ denote the left coset space $ G / G^{\prime} $. By a theorem of Greither and Pareigis (\cite[Theorem 2.1]{GreitherPareigis87} or \cite[Theorem 6.8]{Childs00}), the Hopf-Galois structures on $ L/K $ are in bijective correspondence with regular subgroups of $ \Perm{X} $ that are normalized by the image of $ G $ under the left translation map $ \lambda : G \rightarrow \Perm{X} $. Since $ |X|=p $, any such subgroup must be cyclic of order $ p $. We shall reformulate the problem via Byott's translation  theorem (\cite[Proposition 1]{Byott96} or \cite[Theorem 7.3]{Childs00}): let $ M = \langle \mu \rangle  $ be an abstract group of order $ p $, and recall that the {\em holomorph} of $ M $ is the group $ \Hol{M} \cong M \rtimes \Aut{M} $;  the appropriate subgroups of $ \Perm{X} $ are then in bijective correspondence with equivalence classes of embeddings $ \beta: G \hookrightarrow \Hol{M} $ such that $ \beta(G^{\prime} ) = \Stab{1_{M}} $, modulo conjugation by elements of $ \Aut{M} $. 

\begin{proposition} \label{prop_unique_HGS}
The extension $ L/K $ admits exactly one Hopf-Galois structure. 
\end{proposition}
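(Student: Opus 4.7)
The plan is to apply the reformulation via Byott's translation set up just before the statement: Hopf-Galois structures on $ L/K $ correspond to equivalence classes of embeddings $ \beta : G \hookrightarrow \Hol{M} $ satisfying $ \beta(G^{\prime}) = \Stab{1_{M}} $, modulo conjugation by $ \Aut{M} $. Since $ \Hol{M} = M \rtimes \Aut{M} $ has order $ p(p-1) = |G| $, every such $ \beta $ is an isomorphism, and the stabilizer $ \Stab{1_{M}} $ is precisely the subgroup $ \Aut{M} $ of $ \Hol{M} $.

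The strategy is to enumerate such embeddings directly. First I would fix $ \beta(\sigma) $: since $ M $ is the unique Sylow $ p $-subgroup of $ \Hol{M} $ and $ \langle \sigma \rangle $ is the unique Sylow $ p $-subgroup of $ G $, $ \beta $ must carry $ \langle \sigma \rangle $ isomorphically onto $ M $, so $ \beta(\sigma) $ may be any of the $ p-1 $ generators of $ M $. Next, the condition $ \beta(G^{\prime}) = \Aut{M} $ forces $ \beta(\tau) \in \Aut{M} $, and the defining relation $ \tau \sigma \tau^{-1} = \sigma^{d} $ forces $ \beta(\tau) $ to act on $ \beta(\sigma) \in M $ as the $ d $-th power map. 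Since $ \Aut{M} $ is cyclic of order $ p-1 $ and acts faithfully on $ M $, there is a unique element of $ \Aut{M} $ realizing this action, so $ \beta(\tau) $ is determined by the choice of $ \beta(\sigma) $. This yields exactly $ p-1 $ valid embeddings.

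Finally, conjugation by $ \Aut{M} $ fixes $ \beta(\tau) $ (as $ \Aut{M} $ is abelian) and acts on $ \beta(\sigma) \in M $ via the tautological faithful action of $ \Aut{M} $ on $ M $, which is transitive on the $ p-1 $ generators of $ M $. Hence the $ p-1 $ embeddings form a single orbit under conjugation, so there is exactly one equivalence class, and therefore exactly one Hopf-Galois structure on $ L/K $. The only delicate point will be matching the two descriptions of the automorphism group—the one coming from the primitive root $ d $ used to express the relations in $ G $, and the intrinsic cyclic structure of $ \Aut{M} $—but this reduces to a discrete logarithm computation that is essentially immediate once one chooses a generator of $ \Aut{M} $.
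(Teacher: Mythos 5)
Your proposal is correct and follows essentially the same route as the paper's proof: the Sylow $p$-subgroup argument pins down $\beta(\sigma)$ as a generator of $M$, the stabilizer condition together with the relation $\tau\sigma\tau^{-1}=\sigma^{d}$ pins down $\beta(\tau)$ as the $d$-th power automorphism, and commutativity of $\Aut{M}$ plus its simply transitive action on generators of $M$ collapses all $p-1$ embeddings into a single conjugacy class. The only presentational difference is that you enumerate all valid embeddings and exhibit one orbit, whereas the paper fixes one embedding and shows any other is conjugate to it; these are the same computation.
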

\begin{proof}
Let $ \theta \in \Aut{M} $ be defined by $ \theta(\mu) = \mu^{d} $; it is then easy to see that $ \beta: G \rightarrow \Hol{M} $ defined by
\[  \beta(\sigma) = (\mu,1) \mbox{ and } \beta(\tau) = (1,\theta) \]
is an embedding $ \beta: G \hookrightarrow \Hol{M} $ such that $ \beta(G^{\prime}) = \Stab{1_{M}} $. If $ \beta^{\prime} $ is another such embedding then since $ (M,1) $ is the unique Sylow $ p $-subgroup of $ \Hol{M} $ we have $ \beta^{\prime}(\sigma) = \mu^{i} $ for some integer $ i $. Similarly, since $ \beta^{\prime}(\tau)[1_{M}] = 1_{M} $ and $ \tau $ has order $ p-1 $ we have $ \beta^{\prime}(\tau)=(1,\theta^{j}) $ for some integer $ j $ coprime to $ p-1 $. Now we have
\[ \beta^{\prime}(\tau \sigma \tau^{-1}) = \beta^{\prime}(\sigma^{d}) = (\mu^{id},1), \]
but also
\[ \beta^{\prime}(\tau \sigma \tau^{-1})  = (1,\theta^{j})(\mu^{i},1)(1,\theta^{-j}) = (\mu^{ijd},1). \]
Hence $ j=1 $. Now let $ \varphi \in \Aut{M} $ be defined by $ \varphi(\mu)=\mu^{i} $. Then
\[ \beta^{\prime}(\sigma) = (\mu^{i},1) = \varphi (\mu,1) \varphi^{-1} = \varphi \beta(\sigma) \varphi^{-1}, \]
and (since $ \Aut{M} $ is abelian)
\[ \beta^{\prime}(\tau) = (1,\theta) =  \varphi (1,\theta) \varphi^{-1} = \varphi \beta(\tau) \varphi^{-1}. \]
Therefore $ \beta^{\prime} $ and $ \beta $ are conjugate by an element of $ \Aut{M} $, so there is exactly one equivalence class of suitable embeddings $ \beta: G \hookrightarrow \Hol{M} $, and so exactly one Hopf-Galois structure on $ L/K $. 
\end{proof}

By using elements of the proof of Byott's translation theorem, we can determine the regular subgroup of $ \Perm{X} $ that corresponds to the unique Hopf-Galois structure on $ L/K $: 

\begin{proposition} \label{prop_determining_regular_subgroup}
Let $ \eta \in \Perm{ X } $ be defined by $ \eta(\bar{\sigma}^{i}) = \bar{\sigma}^{i-1} $. Then $ N = \langle \eta \rangle $ is the regular subgroup of $ \Perm{X} $ that corresponds to the unique Hopf-Galois structure on $ L/K $. 
\end{proposition}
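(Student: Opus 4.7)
The plan is to exploit the uniqueness established in Proposition \ref{prop_unique_HGS}: by the Greither--Pareigis correspondence \cite[Theorem 6.8]{Childs00}, the unique Hopf-Galois structure on $L/K$ corresponds to a unique regular subgroup of $\Perm{X}$ normalized by $\lambda(G)$. Thus it is enough to verify that $N = \langle \eta \rangle$ is such a subgroup; I do not need to trace the bijections in Byott's translation theorem explicitly.

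First I would write down the action of $\lambda(G)$ on the coset space $X = \{\bar{\sigma}^{i} \suchthat i = 0, 1, \ldots, p-1\}$. A direct calculation using the relation $\tau \sigma \tau^{-1} = \sigma^{d}$ gives
\[ \lambda(\sigma)(\bar{\sigma}^{i}) = \bar{\sigma}^{i+1} \quad \text{and} \quad \lambda(\tau)(\bar{\sigma}^{i}) = \overline{\tau \sigma^{i}} = \overline{\sigma^{id} \tau} = \bar{\sigma}^{id}, \]
where exponents are read modulo $p$. Since $N$ is cyclic of order $p$ and acts freely and transitively on the $p$-element set $X$, it is clearly a regular subgroup of $\Perm{X}$.

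Next I would check normalization by computing the conjugates of $\eta$ by the generators of $\lambda(G)$. Applying the formulas above,
\[ \bigl(\lambda(\sigma) \eta \lambda(\sigma)^{-1}\bigr)(\bar{\sigma}^{i}) = \lambda(\sigma)\bigl( \bar{\sigma}^{i-2} \bigr) = \bar{\sigma}^{i-1} = \eta(\bar{\sigma}^{i}), \]
so $\lambda(\sigma)$ commutes with $\eta$, and
\[ \bigl(\lambda(\tau) \eta \lambda(\tau)^{-1}\bigr)(\bar{\sigma}^{i}) = \lambda(\tau)\bigl( \bar{\sigma}^{i d^{-1}-1} \bigr) = \bar{\sigma}^{i-d} = \eta^{d}(\bar{\sigma}^{i}), \]
where $d^{-1}$ denotes the inverse of $d$ modulo $p$. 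Hence $\lambda(\tau) \eta \lambda(\tau)^{-1} = \eta^{d} \in N$, so $N$ is normalized by $\lambda(G)$.

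I expect no serious obstacle here; the only care needed is in handling the two distinct conjugation actions and keeping track of the modular arithmetic introduced by the non-trivial semidirect product structure of $G$. Having exhibited $N$ as a regular subgroup of $\Perm{X}$ normalized by $\lambda(G)$, the conclusion follows immediately from Proposition \ref{prop_unique_HGS} together with the Greither--Pareigis correspondence: $N$ must be the unique such subgroup, and therefore corresponds to the unique Hopf-Galois structure on $L/K$.
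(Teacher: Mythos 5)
Your proposal is correct, but it takes a genuinely different route from the paper's. The paper works \emph{forwards} through Byott's translation theorem: starting from the embedding $ \beta : G \hookrightarrow \Hol{M} $ constructed in Proposition \ref{prop_unique_HGS}, it builds the bijection $ b : X \rightarrow M $, $ b(\bar{\sigma}^{i}) = \beta(\sigma^{i})[1_{M}] = \mu^{i} $, forms the induced embedding $ \widehat{\beta}(\mu) = b^{-1}\lambda_{M}(\mu) b $, and computes directly that $ \widehat{\beta}(\mu)[\bar{\sigma}^{i}] = \bar{\sigma}^{i+1} = \eta^{-1}(\bar{\sigma}^{i}) $, so that $ \widehat{\beta}(M) = N $ is the regular subgroup attached to $ \beta $. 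You instead bypass the translation machinery: you verify by hand that $ N $ is regular and normalized by $ \lambda(G) $, and then invoke the uniqueness from Proposition \ref{prop_unique_HGS} together with the Greither--Pareigis bijection to conclude that $ N $ must be \emph{the} subgroup. Your computations are all correct --- in particular $ \lambda(\sigma)\eta\lambda(\sigma)^{-1} = \eta $ and $ \lambda(\tau)\eta\lambda(\tau)^{-1} = \eta^{d} $, which in fact duplicate the verification the paper defers to the proof of Proposition \ref{prop_K_algebra_isomorphism} (``$ \,^{\sigma}\eta = \eta $ and $ \,^{\tau}\eta = \eta^{d} $''), so your approach delivers that later lemma as a free byproduct. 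The trade-off is that your argument leans essentially on uniqueness: it exhibits $ N $ as a regular normalized subgroup without identifying which equivalence class of embeddings $ \beta $ it corresponds to, so it would not suffice in a setting admitting several Hopf-Galois structures, where one must match subgroups to structures; the paper's computation does establish that matching and is therefore more robust. One small point worth making explicit: checking conjugation only on the generators $ \lambda(\sigma) $ and $ \lambda(\tau) $ suffices for normalization by all of $ \lambda(G) $, since conjugation by each generator (and hence each inverse and each word in them) maps the finite group $ N $ into, and therefore onto, itself --- routine, but it completes the argument.
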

\begin{proof}
Let $ \beta: G \rightarrow \Hol{M} $ be defined by
\[  \beta(\sigma) = (\mu,1) \mbox{ and } \beta(\tau) = (1,\theta), \]
as in proposition \ref{prop_unique_HGS}. From $ \beta $ we obtain a bijection $ b : X \rightarrow M $ defined by $ b(\bar{\sigma}^{i} ) = \beta(\sigma^{i})[1_{M}] = \mu^{i} $, where $ \bar{\sigma}^{i} = \sigma^{i}G^{\prime} $. The map $ \widehat{\beta} : M \rightarrow \Perm{X} $ defined by $  \widehat{\beta}(\mu) = b^{-1} \lambda_{M}(\mu) b $ (where $ \lambda_{M} $ denotes the left regular representation of $ M $) is then an embedding of $ M $ into $ \Perm{X} $ whose image is regular and normalized by $ \lambda(G) $, and $  \widehat{\beta}(M) $ is the regular subgroup of $ \Perm{X} $ that corresponds to $ \beta $. We have:
\begin{eqnarray*}
 \widehat{\beta}(\mu)[\bar{\sigma}^{i}] & = & b^{-1} [\lambda_{M}(\mu) b[\bar{\sigma}^{i}]] \\
& = & b^{-1}[ \lambda_{M}(\mu) \mu^{i}] \\
& = & b^{-1} [\mu^{i+1}] \\
& = & \bar{\sigma}^{i+1} \\
& = & \eta^{-1}(\bar{\sigma}^{i}),
\end{eqnarray*}
and so $  \widehat{\beta}(M) = N  $.
\end{proof}

The theorem of Greither and Pareigis also asserts that the Hopf algebra giving the Hopf-Galois structure corresponding to $ N $ is $ H=E[N]^{G} $, where $ G $ acts on $ E $ as Galois automorphisms and on $ N $ by conjugation via $ \lambda $, viz. $ \,^{g} \eta = \lambda(g) \eta \lambda(g)^{-1} $ for all $ g \in G $. 

\begin{proposition} \label{prop_K_algebra_isomorphism}
We have $ H \cong K^{p} $ as $ K $-algebras.
\end{proposition}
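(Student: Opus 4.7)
The plan is to exhibit $p$ orthogonal idempotents of $H$ summing to $1$, using the presence of the primitive $p$\textsuperscript{th} root of unity $\zeta \in E$. Since $\dim_K H = [L:K] = p$ (this follows from the Greither--Pareigis theorem, or from Galois descent applied to the free $E$-module $E[N]$ of rank $p$), such idempotents will span $H$ as a $K$-vector space and identify $H$ with $K^p$ as a $K$-algebra.

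First I would determine how $G$ acts on $N = \langle \eta \rangle$ by conjugation via $\lambda$. A direct computation on cosets $\bar{\sigma}^i \in X$ shows that $\lambda(\sigma)\eta\lambda(\sigma)^{-1}(\bar{\sigma}^i)=\bar{\sigma}^{i-1}=\eta(\bar{\sigma}^i)$, and using the relation $\tau\sigma=\sigma^d\tau$ one finds $\lambda(\tau)(\bar{\sigma}^i)=\bar{\sigma}^{id}$, whence $\lambda(\tau)\eta\lambda(\tau)^{-1}(\bar{\sigma}^i)=\bar{\sigma}^{i-d}=\eta^d(\bar{\sigma}^i)$. Therefore the $G$-action on $N$ is given by $\,^{\sigma}\eta=\eta$ and $\,^{\tau}\eta=\eta^d$.

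Next, since $\zeta \in E$ and $N$ is cyclic of order $p$, the group algebra $E[N]$ splits completely, with primitive idempotents
\[ e_i = \frac{1}{p}\sum_{j=0}^{p-1} \zeta^{-ij}\eta^{j} \quad (i=0,1,\ldots,p-1). \]
These are orthogonal and sum to $1$. I would now check that each $e_i$ lies in $H = E[N]^G$. Because $\sigma(\zeta)=\zeta$ and $\,^{\sigma}\eta=\eta$, we have $\sigma(e_i)=e_i$ trivially. For $\tau$, combining $\tau(\zeta)=\zeta^d$ with $\,^{\tau}\eta^{j}=\eta^{jd}$ gives
\[ \tau(e_i)=\frac{1}{p}\sum_{j=0}^{p-1} \zeta^{-ijd}\eta^{jd}, \]
and the substitution $k=jd\pmod{p}$ (which is a bijection on $\{0,1,\ldots,p-1\}$) immediately returns this sum to $e_i$. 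Hence $e_0,\ldots,e_{p-1}\in H$.

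Finally, since the $e_i$ are pairwise orthogonal nonzero idempotents summing to $1$, the $K$-subspace $\bigoplus_{i=0}^{p-1} Ke_i \subseteq H$ has dimension $p$; comparing with $\dim_K H = p$ yields $H=\bigoplus_{i=0}^{p-1} Ke_i \cong K^p$ as $K$-algebras. I do not anticipate a real obstacle: the only subtlety is verifying that the conjugation action of $G$ on $N$ is trivial on $\sigma$ and raises to the $d$-th power on $\tau$, after which the Fourier-style idempotents are automatically $G$-invariant because the actions on $\zeta$ and on $\eta$ twist by the same exponent $d$.
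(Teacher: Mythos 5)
Your proof is correct and follows essentially the same route as the paper: the same Fourier-type idempotents $ e_{i} = \frac{1}{p}\sum_{k} \zeta^{-ik}\eta^{k} $, the same verification that $ \,^{\sigma}\eta = \eta $ and $ \,^{\tau}\eta = \eta^{d} $ so that the twists on $ \zeta $ and $ \eta $ cancel and each $ e_{i} $ lies in $ E[N]^{G} = H $, and the same dimension count to conclude $ H \cong K^{p} $. The only difference is that you spell out the coset computations and the descent argument for $ \dim_{K} H = p $, which the paper leaves as ``easy to verify.''
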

\begin{proof}
Since $ \zeta \in E $ the group algebra $ E[N] $ has an $ E $-basis of mutually orthogonal idempotents
\[ e_{i} = \frac{1}{p} \sum_{k=0}^{p-1} \zeta^{-ik} \eta^{k} \mbox{ for $ i=0,1, \ldots ,p-1 $},\]
and so $ E[N] \cong E^{p} $ as $ E $-algebras. It is easy to verify that $ \,^{\sigma} \eta = \eta $ and $ \,^{\tau} \eta = \eta^{d} $; it follows that each idempotent $ e_{i} $ is fixed by each element of $ G $, and so lies in $ E[N]^{G} = H $. Therefore $ H $ has a $ K $-basis of mutually orthogonal idempotents, and so $ H \cong K^{p} $ as $ K $-algebras. 
\end{proof}

Finally, the theorem of Greither and Pareigis implies that the action of $ H $ on $ L $ is given by
\[ \left( \sum_{k=0}^{p-1} c_{k} \eta^{k} \right) \cdot x = \sum_{k=0}^{p-1} c_{k} \eta^{-k}[\bar{1_{G}}](x) = \sum_{k=0}^{p-1} c_{k} \bar{\sigma}^{k} (x) \mbox{ for all $ x \in L $.}\]

\begin{proposition} \label{prop_action_of_idempotents}
For $ i,j = 0,1, \ldots , p-1 $ we have $ e_{i} \cdot \omega^{j} = \delta_{i,j} \omega^{j} $.
\end{proposition}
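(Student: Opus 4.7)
The plan is to apply directly the explicit formulas that have just been established: the formula for $e_i$ in terms of $\eta$ given in proposition \ref{prop_K_algebra_isomorphism}, together with the formula for the action of $H$ on $L$ stated immediately after that proposition, which says that $\eta^k$ acts as $\bar{\sigma}^{-k}$ (or equivalently, that $e_i$ acts via $\frac{1}{p}\sum_{k=0}^{p-1}\zeta^{-ik}\bar{\sigma}^{k}$).

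First I would observe that although $\bar{\sigma}^{k}$ is a coset rather than an element of $G$, its action on $L = E^{G'}$ is well defined: for $y \in L$ the value $\sigma^{k}(y)$ does not depend on the choice of coset representative, since $\tau$ fixes $y$. Applied to $y = \omega^{j}$, and using the fact that $\sigma(\omega) = \zeta\omega$ (a property of the chosen generator $\sigma$ of the cyclic subgroup of order $p$, applicable to any $p$th root of an element of $K$ lying in $E$), I get $\bar{\sigma}^{k}(\omega^{j}) = \zeta^{jk}\omega^{j}$ as an element of $E$.

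Next I would substitute this into the formula for the action of $e_i$:
\[ e_{i} \cdot \omega^{j} = \frac{1}{p}\sum_{k=0}^{p-1} \zeta^{-ik}\bar{\sigma}^{k}(\omega^{j}) = \frac{\omega^{j}}{p}\sum_{k=0}^{p-1}\zeta^{(j-i)k}. \]
The inner sum is a standard geometric sum over $p$th roots of unity: it equals $p$ when $j \equiv i \pmod{p}$, and $0$ otherwise. Since $0 \leq i,j \leq p-1$, this gives $e_{i}\cdot\omega^{j} = \delta_{i,j}\omega^{j}$, as required.

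There is really no obstacle here — the computation is forced by the Greither–Pareigis formula. The only point worth checking carefully is the well-definedness of $\bar{\sigma}^{k}$ acting on $L$ (resolved by $\tau(\omega)=\omega$), and that the seemingly $E$-valued terms assemble into an element of $L$, which is automatic once the summation over $k$ is carried out and is also guaranteed abstractly because $H = E[N]^{G}$ acts on $L$ by construction.
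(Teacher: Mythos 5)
Your proposal is correct and follows essentially the same route as the paper: both apply the Greither--Pareigis formula so that $e_{i}$ acts as $\frac{1}{p}\sum_{k=0}^{p-1}\zeta^{-ik}\bar{\sigma}^{k}$, use $\bar{\sigma}^{k}(\omega^{j})=\zeta^{jk}\omega^{j}$, and evaluate the resulting geometric sum $\sum_{k=0}^{p-1}\zeta^{k(j-i)}$. Your extra remarks on the well-definedness of the coset action (via $\tau(\omega)=\omega$) are a welcome precision that the paper leaves implicit, but the argument is the same.
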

\begin{proof}
For $ i,j=0,1, \ldots ,p-1 $ we have
\begin{eqnarray*}
e_{i} \cdot \omega^{j} & = & \left( \frac{1}{p} \sum_{k=0}^{p-1} \zeta^{-ik} \eta^{k} \right) \cdot \omega^{j} \\
& = &  \frac{1}{p} \sum_{k=0}^{p-1} \zeta^{-ik} \bar{\sigma}^{k} \omega^{j} \\
& = &  \frac{1}{p} \sum_{k=0}^{p-1} \zeta^{-ik} \zeta^{jk} \omega^{j} \\
& = &  \frac{1}{p} \sum_{k=0}^{p-1} \zeta^{k(j-i)} \omega^{j} \\
& = & \delta_{i,j} \omega^{j}. 
\end{eqnarray*} 
\end{proof}

\section{Hopf-Galois module structure} \label{section_HGMS}

In this section we show that $ \OL $ is locally free over its associated order $ \A $ in $ H $, and determine a criterion for it to be free. We have previously studied the Hopf-Galois module structure of fractional ideals in tame extensions of local or global fields (see \cite{Truman11}, \cite{Truman13}, or \cite{Truman18}, for example), and some of our results could be applied here: for example \cite[Proposition 5.6]{Truman11} implies that $ \OLp $ is a free $ \Ap $-module for each prime ideal $ \p $ of $ \OK $ that does not lie above $ p $. However, our existing results do not apply to the prime ideals lying above $ p $ in our current situation, or construct explicit generators, which we shall require in what follows. Therefore the following proposition is necessary:

\begin{theorem} \label{thm_local_generators}
We have $ \A=\OE[N]^{G} $, and $ \OL $ is a locally free $ \A $-module. For each prime ideal $ \p $ of $ \OK $, a free generator of $ \OLp $ as an $ \Ap $-module is given by
\[ x_{\p} = \left\{ \begin{array}{ >{\displaystyle}ll}
\frac{1}{p}\sum_{j=0}^{p-1} \alpha^{j} & \mbox{ if $ \p \divides p\OK $} \\
 \frac{1}{p} \sum_{j=0}^{p-1} \frac{ \alpha^{j}}{\pi_{\p}^{r_{\p}(a^{j})}} & \mbox{otherwise.}
 \end{array} \right. \]
\end{theorem}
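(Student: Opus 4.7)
The strategy is to prove both statements at once: (i) observe the inclusion $\OE[N]^G \subseteq \A$; (ii) for each prime $\p$ of $\OK$, check that the proposed $x_\p$ lies in $\OLp$ and that $\OE_\p[N]^G \cdot x_\p = \OLp$; (iii) use that $L$ is a free $H$-module of rank one to upgrade the chain $\OE_\p[N]^G \subseteq \Ap$ to equality with $x_\p$ as a free generator, whence $\A = \OE[N]^G$ follows globally by agreement at every localization.

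Step (i) is immediate from the action formula at the end of section \ref{section_HGS}: if $h = \sum_k c_k \eta^k$ has coefficients in $\OE$ and $x \in \OL$, then $h \cdot x = \sum_k c_k \bar\sigma^k(x)$ lies in $\OE \cap L = \OL$, since each $\bar\sigma^k$ preserves $\OE$. For step (ii), $x_\p \in \OLp$ is clear from propositions \ref{prop_integral_basis_away_from_p} and \ref{prop_integral_basis_above_p}: at $\p \mid p$, $x_\p$ is already a listed basis element, and at $\p \nmid p$, $x_\p$ is $\tfrac{1}{p}$ times a sum of listed basis elements with $p$ a unit at $\p$. Applying proposition \ref{prop_action_of_idempotents} gives
\[
e_i \cdot x_\p \;=\; \tfrac{1}{p}\, y_i, \qquad \text{where } y_i = \alpha^i \text{ if } \p \mid p, \text{ and } y_i = \alpha^i/\pi_\p^{r_\p(a^i)} \text{ if } \p \nmid p.
\]
Although $e_i$ itself need not lie in $\OE[N]^G$, the element $p e_i = \sum_k \zeta^{-ik}\eta^k$ always does, and so $(pe_i)\cdot x_\p = y_i$. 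At $\p \nmid p$ the $y_i$ are precisely the integral basis of proposition \ref{prop_integral_basis_away_from_p}; at $\p \mid p$ the elements $y_0,\ldots,y_{p-2}$ together with $x_\p = 1 \cdot x_\p$ are the integral basis of proposition \ref{prop_integral_basis_above_p}. In either case $\OE_\p[N]^G \cdot x_\p \supseteq \OLp$, while the reverse inclusion follows from step (i).

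For step (iii), since $\OE_\p[N]^G \cdot x_\p = \OLp$ spans $L_\p$ over $K_\p$ and $\dim_{K_\p} H_\p = \dim_{K_\p} L_\p = p$, the $H_\p$-linear map $\phi : H_\p \to L_\p$, $h \mapsto h \cdot x_\p$, is an isomorphism. Step (i) gives $\OE_\p[N]^G \subseteq \Ap$, and $\phi$ carries both into $\OLp$; since $\phi(\OE_\p[N]^G) = \OLp$ by step (ii), injectivity of $\phi$ forces $\Ap = \OE_\p[N]^G$ and makes $x_\p$ a free generator of $\OLp$ over $\Ap$. Intersecting over $\p$ yields the global identity $\A = \OE[N]^G$ and local freeness of $\OL$ over $\A$.

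The main obstacle is step (ii) at primes $\p \mid p$: here $e_i \notin \OE_\p[N]^G$, so the idempotents cannot be used directly to transfer $x_\p$ to basis elements. The resolution is the specific choice $x_\p = \tfrac{1}{p}\sum_j \alpha^j$, which is simultaneously the "missing" integral basis element of proposition \ref{prop_integral_basis_above_p} and arranged so that the integral elements $p e_i \in \OE[N]^G$ act on it to produce exactly the remaining basis elements $\alpha^i$.
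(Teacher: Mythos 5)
Your proposal is correct and follows essentially the same route as the paper: establish $\OE[N]^{G} \subseteq \A$, exploit the integral elements $pe_{i} \in \OE[N]^{G}$ acting on $x_{\p}$ via proposition \ref{prop_action_of_idempotents} to recover the integral bases of propositions \ref{prop_integral_basis_away_from_p} and \ref{prop_integral_basis_above_p}, and conclude $\Ap = \OEp[N]^{G}$ with $x_{\p}$ a free generator. The only differences are that you spell out two steps the paper leaves implicit, namely a direct verification of the inclusion $\OE[N]^{G} \subseteq \A$ (the paper cites \cite[Proposition 2.5]{Truman11}) and the injectivity argument via $\phi : H_{\p} \to L_{\p}$ showing that local freeness over the suborder forces equality with the associated order.
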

\begin{proof}
By \cite[Proposition 2.5]{Truman11} we have $ \OE[N]^{G} \subseteq \A $. On the other hand, note that for $ i=0,1, \ldots ,p-1 $ we have $ pe_{i} \in \OE[N]^{G} $. We shall show that for each prime ideal $ \p $ of $ \OK $ the set $ \{ x_{\p}, pe_{1}\cdot x_{\p}, \ldots ,pe_{p-1} \cdot x_{\p} \} $ (with $ x_{\p} $ as defined in the proposition) is an $ \OKp $-basis of $ \OLp $. This will imply that $ \OL $ is a locally free $ \OE[N]^{G} $-module, and hence that $ \A = \OE[N]^{G} $. Recall from proposition \ref{prop_action_of_idempotents} that for $ i,j = 0,1, \ldots , p-1 $ we have $ e_{i} \cdot \alpha^{j} = \delta_{i,j} \alpha^{j} $. If $ \p $ lies above $ p $ then we have 
\begin{eqnarray*}
1 \cdot x_{\p} & = & x_{\p} \\
pe_{i} \cdot x_{\p} & = & \alpha^{i} \mbox{ for $ i=1,2, \ldots ,p-1 $ }.
\end{eqnarray*}
Referring to proposition \ref{prop_integral_basis_above_p} we see that $ \OLp $ is a free $ \OEp[N]^{G} $-module with generator $ x_{\p} $. If $ \p $ does not lie above $ p $ then we have
\begin{eqnarray*}
1 \cdot x_{\p} & = & x_{\p} \\
pe_{i} \cdot x_{\p} & = & \frac{ \alpha^{i}}{\pi_{\p}^{r_{\p}(a^{i})}} \mbox{ for $ i=1,2, \ldots ,p-1 $ }.
\end{eqnarray*}
Referring to proposition \ref{prop_integral_basis_away_from_p} and recalling that $ p \in \OKp^{\times} $ in this case, we see as above that $ \OLp $ is a free $ \OEp[N]^{G} $-module with generator $ x_{\p} $. Therefore $ \OL $ is a locally free $ \OE[N]^{G} $-module, so $ \A = \OE[N]^{G} $. 
\end{proof}

To establish a criterion for $ \OL $ to be a free $ \A $-module, we use a result of Bley and Johnston \cite[Proposition 2.1]{BleyJohnston08}. The $ K $-algebra $ H $ contains a unique maximal $ \OK $-order $\M $, which is the  preimage of $ \OK^{p} $ under the isomorphism $ H \cong K^{p} $ constructed in proposition \ref{prop_K_algebra_isomorphism}. Let $ \M\OL $ denote the smallest $ \M $-submodule of $ L $ that contains $ \OL $. Then the result of Bley and Johnston implies that $ \OL $ is a free $ \A $-module if and only if

\begin{itemize}
\item $ \OL $ is a locally free $ \A $-module;
\item $ \M \OL $ is a free $ \M $-module, and $ \M \OL = \M \cdot x $ for some $ x \in \OL $. 
\end{itemize}

Furthermore, in this case the element $ x \in \OL $ is a free generator of $ \OL $ as an $ \A $-module. We have shown in theorem \ref{thm_local_generators} that $ \OL $ is a locally free $ \A $-module, and so we now focus our attention on the $ \M $-module $ \M\OL $. We shall first establish a criterion for $ \M\OL $ to be a free $ \M $-module, and then turn to the question of when it has a free generator lying in $ \OL $. Certainly  
 $ \M \OL $ is a locally free $ \M $-module, and $ (\M \OL)_{\p} = \M_{\p} \cdot x_{\p} $ (with $ x_{\p} $ as defined in theorem \ref{thm_local_generators}) for each prime ideal $ \p $ of $ \OK $. The $ \M $-module $ \M \OL $ therefore defines a class in the locally free class group $ \Cl{\M} $. Since $ H $ is commutative it satisfies the Eichler condition, and so $ \M \OL $ is a free $ \M $-module if and only if its class in $ \Cl{\M} $ is trivial.  The fact that $ H $ is commutative also implies that there is an isomorphism
\[ \Cl{\M} \cong \frac{ \mathbb{J}(H) }{ H^{\times} \mathbb{U}(\M) }, \]
where $ \mathbb{J}(H) $ denotes the id\'{e}le group of $ H $, $ H^{\times} $ denotes the subgroup of principal id\'{e}les, and $ \mathbb{U}(\M) $ is the subgroup of unit id\'{e}les of $ \M $. Using the explicit generators determined in theorem \ref{thm_local_generators}, we can identify the id\'{e}le whose coset corresponds to the class of $ \M\OL $ in $ \Cl{\M} $:

\begin{proposition} \label{prop_class_of_MOL}
The class of $ \M\OL $ in $\Cl{\M} $ corresponds to the coset of the id\'{e}le $ (h_{\p})_{\p} $, where
\[ h_{\p} = \left\{ \begin{array}{ >{\displaystyle}ll} 
1 & \mbox{if $ \p \divides p\OK $} \\
\sum_{j=0}^{p-1} \frac{e_{j}}{ \pi_{\p}^{r_{\p}(a^{j})}} & \mbox{otherwise} \end{array} \right. \]
\end{proposition}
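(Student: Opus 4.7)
The approach rests on the idèlic description of $\Cl{\M}$ for the commutative maximal order $\M$ already recalled in the text. Since $L$ is free of rank one over $H$ as an $H$-module (the decomposition $L=\bigoplus_{i}e_{i}\cdot L$ afforded by Proposition \ref{prop_action_of_idempotents} has each summand one-dimensional over $K$), any locally free rank-one $\M$-submodule $P$ of $L$ has a class in $\Cl{\M}$ computed as follows: fix a global generator $y$ of $L$ as an $H$-module, write $x_{\p}=h_{\p}\cdot y$ for each prime $\p$ of $\OK$ where $x_{\p}$ generates $P_{\p}$ over $\M_{\p}$, and take the class of the idèle $(h_{\p})_{\p}$. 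A different choice of $y$ alters this by a principal idèle, and a different choice of $x_{\p}$ alters it by an element of $\mathbb{U}(\M)$, so the coset in $\mathbb{J}(H)/(H^{\times}\mathbb{U}(\M))$ is well defined and equals the class of $P$.

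The first step is to exhibit a convenient global generator. I would take $y=\tfrac{1}{p}\sum_{j=0}^{p-1}\alpha^{j}$, which coincides with the local generator $x_{\p}$ from Theorem \ref{thm_local_generators} for primes $\p$ above $p$. By Proposition \ref{prop_action_of_idempotents} we have $e_{i}\cdot y=\tfrac{1}{p}\alpha^{i}$, so any element $\sum_{i}c_{i}\alpha^{i}\in L$ equals $\bigl(p\sum_{i}c_{i}e_{i}\bigr)\cdot y$, confirming that $y$ generates $L$ as an $H$-module.

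For each prime $\p$ of $\OK$, I would solve $h_{\p}\cdot y=x_{\p}$ by writing $h_{\p}=\sum_{i=0}^{p-1}\lambda_{i}^{(\p)}e_{i}$; the identity $h_{\p}\cdot y=\sum_{i}\lambda_{i}^{(\p)}\tfrac{1}{p}\alpha^{i}$ then reduces the problem to matching coefficients of the powers of $\alpha$ against the explicit form of $x_{\p}$ given in Theorem \ref{thm_local_generators}. For $\p\mid p\OK$ this forces $\lambda_{i}^{(\p)}=1$ for every $i$, so $h_{\p}=\sum_{i}e_{i}=1$; for $\p\nmid p\OK$ it forces $\lambda_{i}^{(\p)}=\pi_{\p}^{-r_{\p}(a^{i})}$, yielding the asserted formula. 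Under the isomorphism $H_{\p}\cong K_{\p}^{p}$ of Proposition \ref{prop_K_algebra_isomorphism} sending $e_{i}$ to the $i$th standard basis vector, each coordinate of $h_{\p}$ is a nonzero element of $K_{\p}$, so $h_{\p}\in H_{\p}^{\times}$ and $(h_{\p})_{\p}$ is a genuine idèle.

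There is no real obstacle: once the framework is set up and the global generator $y$ is chosen to match the local generators at the primes above $p$, the result is a direct comparison of coefficients. The only point requiring a moment's thought is the observation that $L$ is free of rank one over $H$, but this follows at once from the idempotent decomposition produced in Section \ref{section_HGS}.
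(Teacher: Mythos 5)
Your proposal is correct and takes essentially the same approach as the paper: both choose the global generator $x=\frac{1}{p}\sum_{j=0}^{p-1}\alpha^{j}$ of $L$ as an $H$-module and read off $h_{\p}$ from the equation $h_{\p}\cdot x=x_{\p}$ using the idempotent action of Proposition \ref{prop_action_of_idempotents} together with the local generators of Theorem \ref{thm_local_generators}. The extra details you supply (well-definedness of the id\'{e}le class and invertibility of each $h_{\p}$) are points the paper leaves implicit.
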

\begin{proof}
Let $ {\displaystyle x = \frac{1}{p} \sum_{j=0}^{p-1} \alpha^{j} } $. Then $ x $ is a free generator of $ L $ as an $ H $-module since by proposition \ref{prop_action_of_idempotents} we have $ e_{i} \cdot x = (1/p) \alpha^{i} $ for each $ i $, and $ \{1,\alpha, \ldots ,\alpha^{p-1} \} $ is a $ K $-basis of $ L $. Therefore the class of $ \OL $ in $ \Cl{\M} $ corresponds to the idele $ (h_{\p})_{\p} $, where for each prime $ \p $ of $ \OK $ the element $ h_{\p} \in H_{\p} $ is defined by $ h_{\p} \cdot x = x_{\p} $ and $ x_{\p} $ is defined as in theorem \ref{thm_local_generators}. The result follows immediately. 
\end{proof}

Since $ H \cong K^{p} $ and $ \M \cong \OK^{p} $, we have isomorphisms 
\[ \frac{ \mathbb{J}(H) }{ H^{\times} \mathbb{U}(\M) } \cong \left( \frac{ \mathbb{J}(K) }{ K^{\times} \mathbb{U}(\OK) } \right)^{p} \cong \Cl{\OK}^{p}, \]
where $ \Cl{\OK} $ is the ideal class group of $ \OK $. Explicitly, if $ (h_{\p})_{\p} \in \mathbb{J}(H) $ then, writing  $ {\displaystyle h_{\p} = \sum_{j=0}^{p-1} z_{j,\p} e_{j}  } $ with $ z_{j,\p} \in K_{\p} $ for each $ \p $, the id\'{e}le $ (h_{\p})_{\p} $ is then mapped to the tuple of classes of ideals
\[ \left( \prod_{\p} \p^{v_{\p}(z_{0,\p})}, \prod_{\p} \p^{v_{\p}(z_{1,\p})}, \ldots , \prod_{\p} \p^{v_{\p}(z_{p-1,\p})} \right). \]

Using this, we obtain a criterion for $ \M\OL $ to be a free $ \M $-module in terms of certain ideals of $ \OK $ being principal. Recall from section \ref{section_classical_version} that the {\em ideals associated} to $ a\OK $ are
\[ \mathfrak{b}_{j} = \prod_{\p} \p^{r_{\p}(a^{j})} \mbox{ for $ 0\leq j \leq p-1 $}, \]
where $ r_{\p}(a^{j}) = \lfloor v_{\p}(a^{j}) / p \rfloor $. 

\begin{proposition} \label{prop_MOL_free}
The $ \M $-module $ \M \OL $ is free if and only if $ \mathfrak{b}_{j} $ is principal for all \linebreak $ j=0,1, \ldots ,p-1 $.
\end{proposition}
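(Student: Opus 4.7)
The plan is to combine Proposition \ref{prop_class_of_MOL} with the explicit isomorphism $\Cl{\M} \cong \Cl{\OK}^{p}$ displayed above and simply read off the classes of the ideals that appear.

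First I would recall that since $H$ is commutative and $\M$ its unique maximal order, the $\M$-module $\M\OL$ is free if and only if its class in $\Cl{\M}$ is trivial, which (via the displayed isomorphism) is equivalent to each of the $p$ ideal classes in $\Cl{\OK}$ appearing in its image being trivial. The image is computed by writing the representing id\`{e}le from Proposition \ref{prop_class_of_MOL} in the idempotent basis as $h_{\p} = \sum_{j=0}^{p-1} z_{j,\p} e_{j}$ and collecting, for each $j$, the ideal $\prod_{\p} \p^{v_{\p}(z_{j,\p})}$.

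Next I would read off the coefficients $z_{j,\p}$ from the formula for $h_{\p}$. For $\p \nmid p\OK$ we have $z_{j,\p} = \pi_{\p}^{-r_{\p}(a^{j})}$, so $v_{\p}(z_{j,\p}) = -r_{\p}(a^{j})$. For $\p \mid p\OK$ we have $h_{\p}=1$, i.e.\ $z_{j,\p}=1$, so $v_{\p}(z_{j,\p})=0$; this is consistent with the fact that the standing hypothesis $a \equiv 1 \pmod{p^{2}\OK}$ forces $v_{\p}(a)=0$ for every prime $\p$ above $p$, whence $r_{\p}(a^{j})=0$ there as well. Consequently the $j$-th ideal coordinate of the image is
\[ \prod_{\p} \p^{-r_{\p}(a^{j})} \;=\; \mathfrak{b}_{j}^{-1}, \]
exactly the inverse of the $j$-th ideal associated to $a\OK$.

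Finally I would conclude: the class of $\M\OL$ in $\Cl{\M}$ corresponds to the tuple $([\mathfrak{b}_{0}]^{-1}, [\mathfrak{b}_{1}]^{-1}, \ldots, [\mathfrak{b}_{p-1}]^{-1}) \in \Cl{\OK}^{p}$, and this is trivial if and only if every $\mathfrak{b}_{j}$ is principal. (One may note that $\mathfrak{b}_{0}=\OK$ automatically, so the condition for $j=0$ is vacuous.) I do not anticipate any real obstacle here; the only point requiring a moment of care is verifying that the primes above $p$ contribute nothing to the ideals $\mathfrak{b}_{j}$, which is immediate from $a \equiv 1 \pmod{p^{2}\OK}$.
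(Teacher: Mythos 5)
Your proposal is correct and takes essentially the same route as the paper's own proof: both reduce freeness of $\M\OL$ to triviality of its class in $\Cl{\M}$, push the id\'{e}le of proposition \ref{prop_class_of_MOL} through the isomorphism $\Cl{\M} \cong \Cl{\OK}^{p}$, and identify the $j$-th coordinate as the class of $\mathfrak{b}_{j}^{-1}$. Your additional checks --- that $a \equiv 1 \pmod{p^{2}\OK}$ forces the primes above $p$ to contribute nothing to the $\mathfrak{b}_{j}$, and that the $j=0$ condition is vacuous since $\mathfrak{b}_{0}=\OK$ --- are correct points the paper leaves implicit.
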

\begin{proof}
As discussed above, $ \M \OL $ is a free $ \M $ module if and only if it has trivial class in $ \Cl{\M} $, the class of $ \M \OL $ in $ \Cl{\M} $ corresponds to the class of the id\'{e}le $ (h_{\p})_{\p} $ defined in proposition \ref{prop_class_of_MOL}, and this id\'{e}le corresponds to the tuple of classes of ideals
\[ \left( \prod_{\p \mid a} \p^{-r_{\p}(a^{0})}, \prod_{\p \mid a} \p^{-r_{\p}(a^{1})}, \ldots , \prod_{\p \mid a} \p^{-r_{\p}(a^{p-1})} \right) = \left( \mathfrak{b}_{0}^{-1}, \mathfrak{b}_{1}^{-1}, \ldots ,\mathfrak{b}_{p-1}^{-1} \right). \]
Therefore $ \M \OL $ is a free $ \M $-module if and only if $ \mathfrak{b}_{j} $ is principal for all $ j=0,1, \ldots ,p-1 $.
\end{proof}

Next we turn to the question of when $ \M\OL $ has a free generator lying in $ \OL $:

\begin{proposition} \label{prop_MOL_free_gen_in_OL}
The $ \M $-module $ \M\OL $ has a free generator lying in $ \OL $ if and only if  the ideals $ \mathfrak{b}_{j} $ associated to $ a\OK $ are principal, with generators $ a_{0},a_{1}, \ldots ,a_{p-1} \in \OK $ such that 
\[ \frac{1}{p} \sum_{j=0}^{p-1} \frac{ \alpha^{j} }{a_{j}} \in \OL. \]
\end{proposition}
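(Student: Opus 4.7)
The plan is to work out $\M\OL$ explicitly as an $\OK$-module using the decomposition $\M = \bigoplus_{j=0}^{p-1}\OK e_j$ coming from the idempotents of Proposition \ref{prop_K_algebra_isomorphism}, and then compare it with $\M x$ for a putative generator $x$. Since Proposition \ref{prop_action_of_idempotents} gives $e_i \cdot \alpha^j = \delta_{ij}\alpha^j$, for any $x = \sum_{j=0}^{p-1} c_j \alpha^j \in L$ with $c_j \in K$ we have $\M x = \bigoplus_{j=0}^{p-1} \OK c_j \alpha^j$, so the question reduces to matching each coefficient ideal $\OK c_j$ with the $e_j$-isotypic component of $\M\OL$.

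First I would identify
\[ \M\OL = \bigoplus_{j=0}^{p-1} \mathfrak{c}_j \alpha^j, \]
where $\mathfrak{c}_j$ is the fractional $\OK$-ideal specified by its localizations $(\mathfrak{c}_j)_\p$, which can be read off from the local generators $x_\p$ exhibited in Theorem \ref{thm_local_generators}. Applying $e_j$ to $x_\p$ via Proposition \ref{prop_action_of_idempotents} gives $\tfrac{1}{p}\alpha^j / \pi_\p^{r_\p(a^j)}$ when $\p \nmid p$ and $\tfrac{1}{p}\alpha^j$ when $\p \mid p$, and since $\{e_j \cdot x_\p\}_j$ is an $\OKp$-basis of $(\M\OL)_\p$, one obtains the local descriptions; using that $p \in \OKp^\times$ at primes not above $p$ and that $v_\p(a)=0$ for $\p \mid p$ (because $a \equiv 1 \pmod{p^2\OK}$), these assemble into the global identity $\mathfrak{c}_j = \tfrac{1}{p}\mathfrak{b}_j^{-1}$.

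With this in hand, $\M x = \M\OL$ for $x = \sum_j c_j \alpha^j \in L$ if and only if $\OK c_j = \tfrac{1}{p}\mathfrak{b}_j^{-1}$ for every $j$, equivalently $(pc_j)\OK = \mathfrak{b}_j^{-1}$. This forces each $\mathfrak{b}_j$ to be principal; choosing a generator $a_j \in \OK$ of $\mathfrak{b}_j$, we may take $c_j = 1/(pa_j)$ (any unit discrepancy can be absorbed into $a_j$), which gives
\[ x = \frac{1}{p}\sum_{j=0}^{p-1} \frac{\alpha^j}{a_j}. \]
The remaining requirement is $x \in \OL$, which is precisely the displayed condition in the proposition. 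The only delicate step is the local computation of $(\M\OL)_\p$ at primes $\p \mid p$, where $p$ is no longer a unit and the local integral basis of Proposition \ref{prop_integral_basis_above_p} differs in shape from the one away from $p$; but the explicit formula for $x_\p$ in Theorem \ref{thm_local_generators} packages this uniformly and reduces the check to an application of Proposition \ref{prop_action_of_idempotents}.
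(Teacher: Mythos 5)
Your argument is correct, and it reaches the conclusion by a genuinely different route from the paper's. The paper's proof leans on Proposition \ref{prop_MOL_free} --- whose own proof goes through the id\'{e}lic description of $ \Cl{\M} $ --- to obtain the equivalence of freeness with principality of the $ \mathfrak{b}_{j} $; it then asserts (without computation) that $ x = \frac{1}{p}\sum_{j} \alpha^{j}/b_{j} $ is a free generator once generators $ b_{j} $ are chosen, parametrizes all free generators as the orbit $ \{ z \cdot x \mid z \in \M^{\times} \} $, and absorbs the resulting units $ u_{j} \in \OK^{\times} $ into the generators via $ a_{j} = u_{j}^{-1} b_{j} $. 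You avoid the class group entirely: from $ (\M\OL)_{\p} = \M_{\p} \cdot x_{\p} $ and the idempotent action you compute the lattice $ \M\OL = \bigoplus_{j} \frac{1}{p}\mathfrak{b}_{j}^{-1} \alpha^{j} $ --- your observation that $ r_{\p}(a^{j}) = 0 $ for $ \p \divides p\OK $, because $ a \equiv 1 \pmod{p^{2}\OK} $, is exactly what reconciles the two shapes of $ x_{\p} $ in Theorem \ref{thm_local_generators} --- and then compare coefficientwise with $ \M x = \bigoplus_{j} \OK c_{j} \alpha^{j} $. This buys a self-contained, more elementary proof: it supplies the verification, left implicit in the paper, that the paper's $ x $ really does generate $ \M\OL $ when the $ \mathfrak{b}_{j} $ are principal, and it recovers Proposition \ref{prop_MOL_free} as a byproduct (freeness is visible as principality of the coefficient ideals $ \frac{1}{p}\mathfrak{b}_{j}^{-1} $). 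What the paper's route buys is economy within its larger plan: the id\'{e}lic description of $ \Cl{\M} $ is needed anyway for Proposition \ref{prop_class_of_MOL} and for the uniform treatment of the normal and non-normal cases in the final section, so routing this proposition through it costs nothing there. Two small points you should make explicit in a final write-up: any $ x \in L $ with $ \M x = \M\OL $ is automatically a \emph{free} generator, because $ \M\OL \supseteq \OL $ spans $ L $ over $ K $ and hence every coefficient $ c_{j} $ is forced to be nonzero; and the generators $ a_{j} $ you extract do lie in $ \OK $, as the statement requires, because each $ \mathfrak{b}_{j} $ is an integral ideal of $ \OK $.
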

\begin{proof}
By proposition \ref{prop_MOL_free} $ \M \OL $ is a free $ \M $ module if and only if $ \mathfrak{b}_{j} $ is principal for all $ j=0,1, \ldots ,p-1 $. Suppose that each ideal $ \mathfrak{b}_{j} $ is principal, say $ \mathfrak{b}_{j} = b_{j}\OK $ for each $ j=0,1, \ldots ,p-1 $. Then a free generator of $ \M \OL $ over $ \M $ is given by
\[ x = \frac{1}{p} \sum_{j=0}^{p-1} \frac{\alpha^{j}}{b_{j}}, \]
and the set of free generators for $ \M \OL $ as an $ \M $-module is precisely the set $ \{ z \cdot x \mid z \in \M^{\times} \} $. Recalling that we have $ \M \cong \OK^{p} $ and $ e_{i}\alpha^{j} = \delta_{i,j}\alpha^{j} $ for $ i,j=0,1, \ldots ,p-1 $, we see that an element $ y^{\prime} \in L $ is a free generator for $ \M \OL $ as an $ \M $-module if and only if it has the form
\[ x^{\prime} = \frac{1}{p} \sum_{j=0}^{p-1} \frac{u_{j}\alpha^{j}}{b_{j}} \]
with $ u_{j} \in \OK^{\times} $ for $ j=0,1, \ldots ,p-1 $. Therefore $ \M\OL $ has a free generator lying in $ \OL $  if and only if there are elements $ u_{0},u_{1}, \ldots ,u_{p-1} \in \OK^{\times} $ such that the corresponding element $ x^{\prime} $ lies in $ \OL $. Writing $ a_{j}=u_{j}^{-1}b_{j} $ for each $ j $, this is equivalent to the existence of elements $ a_{j} $ as in the proposition. 
\end{proof}

By combining the results of this section we obtain a criterion for $ \OL $ to be a free $ \A $-module:

\begin{theorem} \label{thm_criterion_free}
The ring of algebraic integers $ \OL $ is a free $ \A $-module if and only if there exists $ \beta \in \OL $ such that
\begin{enumerate}
\item $ L=K(\beta) $;
\item $ b = \beta^{p} \in \OK $;
\item the ideals associated to $ b\OK $ are principal with generators $ b_{j} $ such that 
\[ \sum_{j=0}^{p-1} \frac{\beta^{j}}{b_{j}} \equiv 0 \pmod{p \OL}. \]
\end{enumerate}
Furthermore, in this case the element
\[ \frac{1}{p} \sum_{j=0}^{p-1} \frac{\beta^{j}}{b_{j}}  \]
is a free generator of $ \OL $ as an $ \A $-module. 
\end{theorem}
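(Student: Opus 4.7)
The plan is to combine the local freeness established in theorem \ref{thm_local_generators} with the criterion of Bley and Johnston \cite[Proposition 2.1]{BleyJohnston08} and proposition \ref{prop_MOL_free_gen_in_OL}. Since $\OL$ is locally free over $\A$, Bley--Johnston reduces $\A$-freeness of $\OL$ to the existence of a free $\M$-generator of $\M\OL$ lying in $\OL$, and any such generator is automatically a free $\A$-generator of $\OL$. Proposition \ref{prop_MOL_free_gen_in_OL}, applied to the fixed tame $\alpha$ with $a = \alpha^{p}$, translates this into the condition that the ideals $\mathfrak{b}_{j}$ associated to $a\OK$ be principal with generators $a_{j}$ satisfying $\frac{1}{p}\sum \alpha^{j}/a_{j} \in \OL$. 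The theorem thus reduces to showing that this $\alpha$-based condition is equivalent to the existence of $\beta \in \OL$ satisfying (1)--(3).

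The direction ``$\Rightarrow$'' is immediate by taking $\beta = \alpha$, $b = a$, $b_{j} = a_{j}$: condition (3) is then just $\frac{1}{p}\sum \alpha^{j}/a_{j} \in \OL$ rewritten as a congruence modulo $p\OL$.

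For ``$\Leftarrow$'', suppose $\beta \in \OL$ satisfies (1)--(3). A short Galois argument in $E$---write $\beta = \sum c_{i}\alpha^{i}$ with $c_{i} \in K$ and apply $\sigma$ to $\beta^{p} \in K$---forces $\beta = c\alpha^{k}$ for some $c \in K^{\times}$ and $\gcd(k,p) = 1$. Then $b = c^{p}a^{k}$, and a direct valuation computation yields $\mathfrak{b}_{j}' = (c^{j}a^{\lfloor kj/p\rfloor})\,\mathfrak{b}_{kj \bmod p}$, where the primed ideals are associated to $b\OK$. Hence principality of each $\mathfrak{b}_{j}'$ with generator $b_{j}$ transfers to principality of $\mathfrak{b}_{r}$ (for $r = kj \bmod p$) with generator $a_{r} := b_{j}/(c^{j}a^{\lfloor kj/p\rfloor})$. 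Using the identity $\alpha^{kj} = a^{\lfloor kj/p\rfloor}\alpha^{r}$, substituting $\beta^{j} = c^{j}\alpha^{kj}$ into $\frac{1}{p}\sum_{j=0}^{p-1}\beta^{j}/b_{j}$ and reindexing by $r = kj \bmod p$ produces exactly $\frac{1}{p}\sum_{r=0}^{p-1}\alpha^{r}/a_{r}$. Condition (3) now supplies the hypothesis of proposition \ref{prop_MOL_free_gen_in_OL}, and the common value $\frac{1}{p}\sum \beta^{j}/b_{j} = \frac{1}{p}\sum \alpha^{r}/a_{r}$ is a free $\M$-generator of $\M\OL$ lying in $\OL$, hence a free $\A$-generator of $\OL$ by Bley--Johnston.

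The main obstacle is the bookkeeping in the ``$\Leftarrow$'' direction: recognising the form $\beta = c\alpha^{k}$ and tracking how the factor $c^{j}a^{\lfloor kj/p\rfloor}$ propagates through the associated ideals and the reindexed sum, so that the generator supplied by condition (3) coincides with the one produced by proposition \ref{prop_MOL_free_gen_in_OL}. Once this calculation is in hand, both the equivalence and the explicit form of the $\A$-generator follow directly from the earlier results.
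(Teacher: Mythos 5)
Your proposal is correct and follows essentially the same route as the paper: local freeness from theorem \ref{thm_local_generators}, the Bley--Johnston criterion, and proposition \ref{prop_MOL_free_gen_in_OL}, with the converse handled by writing $\beta = c\alpha^{k}$ and transferring the principal generators of the ideals associated to $b\OK$ to those associated to $a\OK$ via the reindexing $r = kj \bmod p$ --- exactly the change-of-generator computation the paper imports from Del Corso--Rossi's Remark 1 (parametrized there by $t = \ell^{-1} \bmod p$ instead of $k$). Your only addition is that you sketch the Galois argument forcing $\beta = c\alpha^{k}$, a step the paper cites rather than proves, and your sketch is sound.
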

\begin{proof}
If $ \OL $ is a free $ \A $-module then by the result of Bley and Johnston $ \M\OL = \M \cdot x $ for some $ x \in \OL $. Therefore by proposition \ref{prop_MOL_free_gen_in_OL} the ideals associated to $ a\OK $ are principal, with generators $ a_{0},a_{1}, \ldots ,a_{p-1} \in \OK $ such that 
\[ \frac{1}{p} \sum_{j=0}^{p-1} \frac{ \alpha^{j} }{a_{j}} \in \OL, \]
and so the element $ \beta = \alpha \in \OL $ satisfies (1),(2), and (3). Conversely, suppose that $ \beta \in \OL $ satisfies (1),(2), and (3). We follow the argument of \cite[Remark 1]{CorsoRossi10}. Since $ L=K(\beta) $, we have $ \beta = \alpha^{\ell} c $ for some $ \ell = 1,2, \ldots ,p-1 $ and $ c \in K $. Let $ t $ be the inverse of $ \ell $ modulo $ p $, and for each $ j=0,1, \ldots ,p-1 $ let
\[ a_{j} = b_{\overline{jt}} c^{-jt} a^{- \lfloor ljt / p \rfloor} \in \OL, \]
where $ \overline{jt} $ denotes the principal remainder of $ jt $ modulo $ p $. Then the elements $ a_{j} $ generate that ideals associated to $ a\OK $, and there is an equality of sets 
\[ \{ 1, \beta / b_{1}, \ldots ,\beta^{p-1} / b_{p-1} \} = \{ 1, \alpha / a_{1}, \ldots , \alpha^{p-1} / a_{p-1} \}. \]
Therefore 
\[ \sum_{j=0}^{p-1} \frac{\alpha^{j}}{a_{j}} = \sum_{j=0}^{p-1} \frac{\beta^{j}}{b_{j}} \equiv 0 \pmod{p \OL}, \]
so by proposition \ref{prop_MOL_free_gen_in_OL} we have $ \M\OL = \M \cdot x $ for some $ x \in \OL $, and so by the result of Bley and Johnston $ \OL $ is a free $ \A $-module. 
\end{proof}

Finally, we observe that the conditions appearing in this criterion are identical to those appearing in G\'{o}mez Ayala's criterion, as summarized in section \ref{section_classical_version}.

\section{A uniform approach to the normal and non-normal cases} \label{section_uniform}

We have seen that a radical extension of number fields $ L/K $ of degree $ p $ admits a unique Hopf-Galois structure: that given by $ H= K[G] $ (with $ G=\Gal{L/K} $) if $ L/K $ is normal, and that given by $ H=E[N]^{G} $ (with $ E $ the Galois closure of $ L/K $, $ G =\Gal{E/K}$, and $ N $ as in proposition \ref{prop_determining_regular_subgroup}) if $ L/K $ is non-normal. In either case we have $ H \cong K^{p} $ as $ K $-algebras: in the normal case because $ K $ contains a primitive $ p\textsuperscript{th} $ root of unity; in the non-normal case by proposition \ref{prop_K_algebra_isomorphism}. Writing $ L=K(\alpha) $ for some $ \alpha \in L $ such that $ \alpha^{p} \in K $ and renumbering if necessary, the orthogonal idempotents in $ H $ then act by $ e_{i} \cdot \alpha^{j} = \delta_{i,j}\alpha^{j} $ for $ i,j=0,1, \ldots ,p-1 $. This uniformity in the $ K $-algebra structure of $ H $ and its action on $ L $ has implications for questions of integral module structure. 

Suppose that $ L/K $ is tame. Assuming that $ p $ is unramified in $ K $ in the non-normal case, we can then choose $ \alpha $ such that  $ \alpha^{p} \equiv 1 \pmod{(\zeta-1)^{p}\OK} $ if $ L/K $ is normal and $ \alpha^{p} \equiv 1 \pmod{p^{2}\OK} $ if $ L/K $ is non-normal (proposition \ref{prop_criterion_for_tame}). In either case,  the ring of algebraic integers $ \OL $ is locally free over its associated order in $ H $: in the normal case by Noether's theorem and in the non-normal case by theorem \ref{thm_local_generators}. Moreover, the local generators of $ \OLp $ over $ \Ap $ are the same in both cases: for a prime ideal $ \p $ of $ \OK $ not lying above $ p $ the orthogonal idempotents in $ H_{\p} $ form an $ \OKp $-basis of $ \Ap $ in both cases, so the appropriate parts of the proof of theorem \ref{thm_local_generators} apply equally well to the normal and non-normal case. For $ \p $ lying above $ p $ a small modification to the argument of theorem \ref{thm_local_generators} shows that the element $ x_{\p} $ defined there is also a free generator of $ \OLp $ over $ \Ap $ in the normal case. (Alternatively, since in the normal case $ \Ap $ is a local Hopf order for such $ \p $, we could deduce this from the Childs-Hurley criterion: see \cite[Theorem 14.7]{Childs00}.) 

Given that $ \OL $ is a locally free $ \A $-module in both case, the result of Bley and Johnston \cite[Proposition 2.1]{BleyJohnston08} implies that it is a free $ \A $-module if and only if $ \M\OL $ is a free $ \M $-module with a generator lying in $ \OL $ (where $ \M $ denotes the unique maximal order in $ H $). But we have $ \M \cong \OK^{p} $ via orthogonal idempotents in both cases, and for each $ \p $ the element $ x_{\p} $ defined in theorem \ref{thm_local_generators} is a free generator of $ (\M\OL)_{\p} $ as an $ \M_{\p} $-module. Therefore all of the arguments in section \ref{section_HGMS} involving the id\'{e}lic description of $ \Cl{\M} $ and the id\'{e}le corresponding to the class of $ \M\OL $ apply equally well in both cases, which explains why the criterion we obtained in theorem \ref{thm_criterion_free} is identical to that obtained by G\'{o}mez Ayala.


\begin{thebibliography}{99}

\bibitem{BleyJohnston08}
Bley W, Johnston H. \textit{Computing generators of free modules over orders in group algebras.} Journal of Algebra. 2008; 320(2):836-52.
%
\bibitem{Byott96}
Byott NP. \textit{Uniqueness of Hopf Galois structure for separable field extensions.} Communications in Algebra. 1996; 24(10):3217-28.
%
\bibitem{Byott97}
Byott NP. \textit{ Galois structure of ideals in wildly ramified abelian $ p $-extensions of a $ p $-adic field, and some applications.} Journal de Th\'{e}orie des Nombres de Bordeaux.1997; 9(1):201-19.
%
\bibitem{Childs89}
Childs LN. \textit{On the Hopf Galois theory for separable field extensions.} Communications in Algebra. 1989; 17(4):809-25.
%
\bibitem{Childs00}
Childs L. \textit{ Taming wild extensions: Hopf algebras and local Galois module theory.} American Mathematical Soc. 2000.
%
\bibitem{CorsoRossi10}
Del Corso I, Rossi LP. \textit{Normal integral bases for cyclic Kummer extensions.} Journal of Pure and Applied Algebra. 2010; 214(4):385-91.
%
\bibitem{Elder18}
Elder GG. \textit{Ramified extensions of degree p and their Hopf–Galois module structure.} Journal de Th\'{e}orie des Nombres de Bordeaux. 2018; 30(1):19-40.
%
\bibitem{Frohlich83}
Fr\"{o}hlich A. \textit{Galois module structure of algebraic integers.} Springer: New York. 1983.
%
\bibitem{FrohlichTaylor93}
Fr\"{o}hlich A, Taylor MJ. \textit{Algebraic number theory.} Cambridge University Press. 1993
\bibitem{GomezAyala94}
G\'{o}mez Ayala EJ. \textit{Bases normales d'entiers dans les extensions de Kummer de degr\'{e} premier.} Journal de Th\'{e}orie des Nombres de Bordeaux. 1994; 6:95-116.
%
\bibitem{GreitherPareigis87}
Pareigis B, Greither C. \textit{Hopf Galois theory for separable field extensions.} Journal of Algebra. 1987; 1:239-58.
%
\bibitem{Hecke81}
Hecke E, Goldman JR, Kotzen R. \textit{Lectures on the theory of algebraic numbers.} Springer: New York. 1981. 
%
\bibitem{Ichimura04}
Ichimura, H. \textit{On the ring of integers of a tame Kummer extension over a number field.} Journal of Pure and Applied Algebra. 2004; 187:169-182.
%
\bibitem{Ichimura09}
Ichimura, H. \textit{Note on Galois descent of a normal integral basis of a cyclic extension of degree $p$.} Proc. Japan Acad. 2009; 85 Ser. A: 160-162.
%
\bibitem{Koch15}
Koch A. \textit{Scaffolds and integral Hopf Galois module structure on purely inseparable extensions.} New York J. Math. 2015; (21):73-91.
%
\bibitem{Kohl98}
Kohl T. \textit{Classification of the Hopf Galois structures on prime power radical extensions.} Journal of Algebra. 1998; 207(2):525-46.
%
\bibitem{Truman11}
Truman PJ. \textit{Towards a generalisation of Noether’s theorem to nonclassical Hopf–Galois structures.} New York J. Math. 2011; 17:799-810.
%
\bibitem{Truman13}
Truman PJ. \textit{Integral Hopf–Galois structures for tame extensions.} New York J. Math. 2013; 19:647-55.
%
\bibitem{Truman18}
Truman PJ. \textit{Commutative Hopf–Galois module structure of tame extensions.} Journal of Algebra. 2018; 503:389-408.



\end{thebibliography}
\end{document}